\documentclass[12pt]{amsart}
\usepackage{amssymb, hyperref, mathrsfs,xcolor, enumerate}

\usepackage[margin=1.3in]{geometry}

\newtheorem{theoremIntro}{Theorem}[]

\newtheorem{questionIntro}{Question}

\newtheorem{theorem}{Theorem}[section]
\newtheorem{lemma}[theorem]{Lemma}

\newtheorem{definition}[theorem]{Definition}
\newtheorem{corollary}[theorem]{Corollary}

\theoremstyle{remark}
\newtheorem{remark}[theorem]{Remark}

\numberwithin{equation}{section}

\newcommand{\calF}{\ensuremath{\mathcal{F}}}

\newcommand{\scrW}{\ensuremath{\mathscr{W}}}

\newcommand{\mo}{{-1}}
\newcommand{\modu}{\ensuremath{\mathrm{\;mod\;}}}

\newcommand{\bbZ}{\ensuremath{\mathbb{Z}}}
\newcommand{\bbN}{\ensuremath{\mathbb{N}}}

\begin{document}
\title{Asymptotic complements in the integers}
\author{Arindam Biswas}
\address{Universit\"at Wien, Fakult\"at f\"ur Mathematik, Oskar-Morgenstern-Platz 1, 1090 Wien, Austria \& Erwin Schr\"odinger International Institute for Mathematics and Physics (E.S.I.) Boltzmanngasse 9, 1090 Wien, Austria}
\curraddr{}
\email{arindam.biswas@univie.ac.at\\
		arin.math@gmail.com}
\thanks{}

\author{Jyoti Prakash Saha}
\address{Department of Mathematics, Indian Institute of Science Education and Research Bhopal, Bhopal Bypass Road, Bhauri, Bhopal 462066, Madhya Pradesh,
India}
\curraddr{}
\email{jpsaha@iiserb.ac.in}
\thanks{}

\subjclass[2010]{11B13, 11P70, 05E15, 05B10}

\keywords{Sumsets, Additive complements, Asymptotic complements, Minimal complements, Additive number theory}

\begin{abstract}
Let $W$ be a nonempty subset of the set of integers $\mathbb Z$. A nonempty subset $C$ of $\mathbb Z$ is said to be an asymptotic complement to $W$ if $W+C$ contains almost all the integers except a set of finite size. The set $C$ is said to be a minimal asymptotic complement to $W$ if $C$ is an asymptotic complement to $W$, but $C\setminus \lbrace c\rbrace$ is not an asymptotic complement to $W$ for every $c\in C$. Asymptotic complements have been studied in the context of representations of integers since the time of Erd\H{o}s, Hanani, Lorentz and others, while the notion of minimal asymptotic complements is due to Nathanson. In this article, we study minimal asymptotic complements in $\mathbb{Z}$ and deal with a problem of Nathanson on their existence and their inexistence.
\end{abstract}

\maketitle

\section{Introduction}

\subsection{Background and Motivation}
Let $(G,\cdot)$ be a group where $\cdot$ denotes its group operation. If $A, B$ are two nonempty subsets of $G$, then we define the product set $A\cdot B$ as 
$$A\cdot B := \{a\cdot b \,|\, a\in A, b\in B\}.$$
If $A$ contains only one element $a$, then $A\cdot B$ is denoted by $a\cdot B$. 
Henceforth, we will omit the symbol ``$\,\cdot\,$'' and denote $A\cdot B$ by $AB$. If $A$ (resp. $B$) contains only one element $a$ (resp. $b$), then $AB$ is denoted by $aB$ (resp. $Ab$). 
Given two nonempty subsets $W,C$ of $G$, the set $C$ is called an asymptotic complement to $W$ if there exists a finite subset $F$ of $G$ such that $W C = G\setminus F$. The set $C$ is called a minimal asymptotic complement to $W$ if $C$ is an asymptotic complement to $W$, but $C\setminus \lbrace c\rbrace$ is not for every $c\in C$. In the context of abelian groups, the composition law is denoted by the symbol ``$+$" and  we will call the sets of the form $A+B$ as sumsets instead of product sets. 

Asymptotic complements have been studied since a long time in the context of representations of the integers. Indeed, one of the earliest instances of asymptotic complements can be found in the work of Lorentz \cite{Lorentz54} who showed that given an infinite set $W\subseteq \mathbb{N}$, there exists a set $C\subseteq \mathbb{N}$ of asymptotic density zero such that $W+C$ covers ``almost all" the positive integers\footnote{By ``almost all" we mean all except a set of finite size.}. This proved a conjecture of Erd\H{o}s (each infinite subset $W$ of $\bbN$ has an asymptotic complement of asymptotic density zero). Erd\H{o}s himself studied a number of questions related to the density of asymptotic complements for particular subsets $W$ of the positive integers. See \cite{Erdos54}, \cite{ErdosSomeUnsolved57} etc. Some more recent results on asymptotic complements can be found in \cite{WolkeOnProbErdosAddNoTh, FangChenAddCo1, FangChenAddCo2, FangChenAddCo3} etc.

The notion of minimal asymptotic complements was introduced by Nathanson in 2011 in the context of minimal bases (see \cite[\S 5]{NathansonAddNT4}). He also asked several questions related to the existence and inexistence of minimal asymptotic complements.

\begin{questionIntro}
\cite[Problem 14]{NathansonAddNT4}\label{nathansonprob14}
Let $W$ be a finite or infinite set of integers. 
\begin{enumerate}[(a)]
\item Does there exist a minimal asymptotic complement to $W$? 
\item Does each asymptotic complement to $W$ contain a minimal asymptotic complement? 
\end{enumerate}
\end{questionIntro}

\begin{questionIntro}
\cite[Problem 16]{NathansonAddNT4}\label{nathansonprob16}
Let $G$ be an infinite group, and let $W$ be a finite or infinite subset of $G$.
\begin{enumerate}[(a)]
\item Does there exist a minimal asymptotic complement to $W$? 
\item Does each asymptotic complement to $W$ contain a minimal asymptotic complement? 
\end{enumerate}
\end{questionIntro}

In the same article, Nathanson also introduced the concept of minimal complements. Given two nonempty subsets $W,C$ of a group $G$, the set $C$ is called a complement to $W$ if $W  C = G$. The set $C$ is called a minimal complement to $W$ if $C$ is a complement to $W$, but $C\setminus \lbrace c\rbrace$ is not for all $c \in C$. Henceforth, by a complement we shall mean as just mentioned above. If we mean set-theoretic complement, we shall explicitly state it. 

\subsection{Statements of results} 
In this article, we shall mainly concentrate on Question \ref{nathansonprob14} and derive sufficient conditions on the existence and the inexistence of minimal asymptotic complements in $\mathbb{Z}$ (see Theorems \ref{Thm.A}, \ref{Thm.B}, \ref{Thm.C} and \ref{Thm.D}). Moreover, some of our results hold in the full generality of arbitrary infinite groups. Thus, we deal partially with Question \ref{nathansonprob16} as well (see Theorems \ref{Thm.A}, \ref{Thm.B}).

First, we consider the finite subsets and subsets whose set-theoretic complement is finite.

\begin{theoremIntro}
\label{Thm.A}
Let $G$ be an infinite group and $W\subseteq G$.
\begin{enumerate}
\item If $W$ is finite and nonempty, then no asymptotic complement of $W$ contains a minimal asymptotic complement and $W$ does not admit any minimal asymptotic complement. 
\item If $G\setminus W$ is finite and nonempty\footnote{If $G\setminus W$ is empty, then any singleton subset of $G$ is a minimal complement and minimal asymptotic complement to $W$.}, then $W$ admits a minimal complement of cardinality two and the singleton subsets of $G$ are precisely the minimal asymptotic complements of $W$.
\end{enumerate}
\end{theoremIntro}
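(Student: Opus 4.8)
The plan is to handle the two parts separately, each resting on one elementary observation: multiplying by a fixed element of $G$ alters a cofinite subset only by a finite set, whereas $G$ itself is infinite.

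For part (1), let $W$ be finite and nonempty and let $C$ be an arbitrary asymptotic complement to $W$, so $WC = G\setminus F$ with $F$ finite. First I would note that $C$ is necessarily infinite: $|WC|\le |W|\,|C|$, so a finite $C$ would make $WC$ finite, contradicting that $G\setminus F$ is infinite. Now fix $c\in C$; then $C\setminus\{c\}$ is nonempty and, from $WC = W(C\setminus\{c\})\cup Wc$, we get $W(C\setminus\{c\})\supseteq (G\setminus F)\setminus Wc = G\setminus(F\cup Wc)$, which is cofinite since $Wc$ is finite. Hence $C\setminus\{c\}$ is again an asymptotic complement, so $C$ is not minimal. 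As this argument shows that \emph{every} asymptotic complement of $W$ is non-minimal, no asymptotic complement of $W$ can contain a minimal one, and $W$ itself admits none.

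For part (2), set $F_0 := G\setminus W$, a nonempty finite set. The identity I would record at the outset is $G\setminus(Wg) = (G\setminus W)g = F_0 g$ for every $g\in G$; in particular $Wg$ is cofinite, so every singleton $\{g\}$ is an asymptotic complement to $W$, and it is trivially minimal (its only proper subset is $\emptyset$, which is not a complement). Conversely, if $C$ is a minimal asymptotic complement with $|C|\ge 2$, then picking $c\in C$ and $c'\in C\setminus\{c\}$ gives $W(C\setminus\{c\})\supseteq Wc'$, which is cofinite, so $C\setminus\{c\}$ is an asymptotic complement — contradicting minimality. Thus the minimal asymptotic complements of $W$ are precisely the singletons. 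Finally, for the minimal complement of cardinality two: no singleton $\{g\}$ is a complement, since $Wg = G$ forces $W = G$, so any two-element complement is automatically minimal. Using the identity above, $Wg_1\cup Wg_2 = G$ is equivalent to $F_0 g_1\cap F_0 g_2 = \emptyset$. Taking $g_1 = e$, I then need $g_2\notin F_0^{-1}F_0$; such a $g_2$ exists because $F_0^{-1}F_0$ is finite and $G$ is infinite, and $g_2\ne e$ since $e\in F_0^{-1}F_0$. Then $\{e,g_2\}$ is the desired minimal complement of cardinality two.

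I do not expect a genuine obstacle here; the only care needed is with one-sided cosets in the possibly nonabelian setting — verifying $G\setminus(Wg) = (G\setminus W)g$ rather than the left-handed version — and confirming that the two elements produced in part (2) are genuinely distinct.
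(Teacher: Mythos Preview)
Your proof is correct and follows essentially the same approach as the paper's: for part (1) both arguments show that removing any element (or finite subset) from an asymptotic complement leaves an asymptotic complement since $W$ times a finite set is finite, and for part (2) both find the second element of the minimal complement by avoiding a finite ``bad'' set of the form $F_0^{-1}F_0$ (the paper writes it as $\{g_ig_j^{-1}\}$). Your treatment is slightly more explicit than the paper's in verifying the ``precisely'' clause---that no asymptotic complement of size at least two can be minimal---and in keeping the left/right multiplication consistent, but the underlying ideas coincide.
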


Theorem \ref{Thm.A}(1) answers each part of Questions \ref{nathansonprob14}, \ref{nathansonprob16} in negative when $W$ is finite, while Theorem \ref{Thm.A}(2) answers each part of Questions \ref{nathansonprob14}, \ref{nathansonprob16} in affirmative when $W$ has finite set-theoretic complement in $G$. The above result is proved in Section \ref{Sec:Struc}.

Next, we consider the subsets which are subgroups or translates of subgroups, and the subsets which contain subgroups.

\begin{theoremIntro}
\label{Thm.B}
Let $G$ be an infinite group.
\begin{enumerate}
\item Let $W$ be a (left or right) translate of an infinite subgroup of $G$. Then any asymptotic complement of $W$ contains a minimal asymptotic complement.
\item Suppose $G\setminus W$ is infinite and $W$ contains an infinite subgroup of $G$ of finite index in $G$. Then $W$ admits a minimal asymptotic complement and a minimal complement in $G$. In particular, the union of a nonzero subgroup of $\bbZ$ and a finite subset of $\bbZ$ admits a minimal asymptotic complement. 
\end{enumerate}
\end{theoremIntro}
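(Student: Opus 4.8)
The plan is to treat the two parts of Theorem \ref{Thm.B} separately; part (1) reduces to coset combinatorics, and part (2) to producing a finite complement and then pruning it. For part (1), I would first reduce to the case that $W=H$ is itself an infinite subgroup of $G$. If $W=gH$, then $WC=g(HC)$ for every $C\subseteq G$, and since left translation by $g$ is a bijection of $G$, $WC$ is cofinite exactly when $HC$ is; hence $gH$ and $H$ have the same asymptotic complements and the same minimal ones, so this case is subsumed. If $W=Hg$, then $WC=H(gC)$, and $C\mapsto gC$ is an inclusion-preserving bijection of the power set of $G$ taking (minimal) asymptotic complements of $Hg$ to those of $H$, so ``every asymptotic complement of $Hg$ contains a minimal one'' follows from the corresponding statement for $H$. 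Now let $C$ be an asymptotic complement of an infinite subgroup $H$, so $HC=\bigcup_{c\in C}Hc$ is cofinite. This is a union of right cosets of $H$; distinct right cosets are disjoint and each is infinite, so a cofinite union of right cosets must be all of $G$. Hence $C$ meets every right coset of $H$, and picking one element of $C$ from each right coset (invoking the axiom of choice if the index is infinite) gives a transversal $C'\subseteq C$. Then $HC'=G$, whereas for each $c\in C'$ the set $H(C'\setminus\{c\})$ omits the infinite coset $Hc$ entirely, so $C'\setminus\{c\}$ is not an asymptotic complement. Thus $C'$ is a minimal asymptotic complement contained in $C$.

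For part (2), let $H\le W$ be the infinite subgroup of finite index $n$, and let $g_1,\dots,g_n$ be representatives of the right cosets of $H$ in $G$. Since $H\subseteq W$,
\[
W\{g_1,\dots,g_n\}\supseteq H\{g_1,\dots,g_n\}=\bigcup_{i=1}^{n}Hg_i=G,
\]
so $C_0:=\{g_1,\dots,g_n\}$ is a finite complement of $W$. I would then prune $C_0$ by greedy deletion, in two ways. Repeatedly deleting from the current set any single element whose removal leaves the product with $W$ equal to $G$, the process (finite, since $C_0$ is) terminates at a nonempty $C_1\subseteq C_0$ with $WC_1=G$ and $W(C_1\setminus\{c\})\neq G$ for every $c\in C_1$, i.e.\ at a minimal complement. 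Starting afresh from $C_0$, so that $WC_0=G$ is cofinite, and repeatedly deleting any single element whose removal keeps $WC$ cofinite, the process terminates at a nonempty $C_2\subseteq C_0$ with $WC_2$ cofinite and $W(C_2\setminus\{c\})$ not cofinite for every $c\in C_2$; since $G$ is infinite, ``not cofinite'' means ``infinite complement'', so $C_2\setminus\{c\}$ is not an asymptotic complement and $C_2$ is a minimal asymptotic complement. (Nonemptiness holds in both cases since $W\emptyset=\emptyset$ is neither $G$ nor cofinite; the hypothesis that $G\setminus W$ be infinite is not used in this argument and only keeps the statement disjoint from Theorem \ref{Thm.A}(2).) For the final sentence of the theorem, take $G=\bbZ$ and $W=d\bbZ\cup F$ with $F$ finite: if $d\ge 2$, then $d\bbZ$ is an infinite subgroup of finite index $d$ contained in $W$ and $\bbZ\setminus W$ is infinite, so part (2) applies; if $d=1$, then $W=\bbZ$ and any singleton is a minimal asymptotic complement.

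The one point that genuinely needs care is, in part (1), the remark that a cofinite union of cosets of an infinite subgroup must be the whole group --- this forces every asymptotic complement of such a $W$ to be an exact complement, after which the transversal argument is immediate; and in part (2), keeping ``not cofinite'' and ``co-infinite'' identified in the infinite group $G$, which is exactly what makes the output of the second pruning a minimal \emph{asymptotic} complement rather than merely a minimal complement. The rest is routine bookkeeping with cosets and finite greedy deletions.
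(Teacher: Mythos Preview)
Your proof is correct and follows essentially the same approach as the paper: for part (1) both arguments show that a cofinite union of right cosets of an infinite subgroup is all of $G$ and then extract a transversal $C'\subseteq C$, and for part (2) both produce a finite complement from coset representatives and then take a minimal element of the finite poset of (asymptotic) complements inside it (your greedy deletion being one way to do this). Your reduction for right translates via $C\mapsto gC$ is a cosmetic variant of the paper's reduction via the conjugate subgroup $g^{-1}Hg$, and your observation that the hypothesis ``$G\setminus W$ infinite'' is not actually used in part (2) is correct.
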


Theorem \ref{Thm.B}(1) answers each part of Questions \ref{nathansonprob14}, \ref{nathansonprob16} in affirmative when $W$ is a translate of an infinite subgroup, while Theorem \ref{Thm.B}(2) answers Questions \ref{nathansonprob14}(a), \ref{nathansonprob16}(a) in affirmative when $W$ has infinite set-theoretic complement and contains an infinite subgroup of finite index. The above result is proved in Section \ref{Sec:Struc}. Moreover, we show that eventually periodic subsets (see Definition \ref{Def.2.4}) of $\bbZ$ do not admit any minimal asymptotic complement (see Lemma \ref{Lemma:EventuPeriNoMinAsymCom}).

\begin{remark}
The group $G$ in the statements of Theorems \ref{Thm.A}, \ref{Thm.B} is not assumed to be abelian.
\end{remark}

Finally, we concentrate on infinite subsets $W\subseteq \mathbb{Z}$ having less structure. In addition, we compare the seemingly close concepts of minimal complements and minimal asymptotic complements and show that even in the case of $\mathbb{Z}$ (which is a totally ordered abelian group), the existence of one does not automatically imply the existence of the other (see Lemmas \ref{Lemma:SufInfYesMac}, \ref{Lemma:SufInfNoMac}, \ref{Lemma:BddBelowYesMCNoMac}). In the following, $\bbZ_{?x}$ denotes the set $\{n\in \bbZ \,|\, n?x\}$ for $?\in \{<, \leq, >, \geq\}$ and $x\in \bbZ$. More generally, for a subset $A$ of $\bbZ$, $A_{?x}$ denotes the set $\{n\in A \,|\, n?x\}$ for $?\in \{<, \leq, >, \geq\}$ and $x\in \bbZ$. A nonempty subset $X$ of $\bbZ$ is said to be an interval if for any two elements $x_1, x_2\in X$ with $x_1\leq x_2$, the set $X$ contains $n$ for any integer $n$ satisfying $x_1\leq n\leq x_2$. The results below are proved in Section \ref{Sec:General}.

\begin{theoremIntro}
\label{Thm.C}
Let $I_1, J_1, I_2, J_2, \cdots$ be nonempty finite intervals in $\bbZ$ such that the following conditions hold. 
\begin{enumerate}[(i)]
\item For any positive integer $k$, 
$$\min J_k = \max I_k+1, \quad
\min I_{k+1} = \max J_k + 1.$$
\item The cardinalities $\# I_k$ of the sets $I_k$ form a strictly increasing sequence $\{\# I_k\}_{\geq 1}$.
\item The cardinalities $\# J_k$ of the sets $J_k$ form a strictly increasing sequence $\{\# J_k\}_{\geq 1}$.
\end{enumerate}
Then each of the following sets 
\begin{enumerate}
\item $\cup_{k=1}^\infty I_k$, 
\item $(\bbZ_{< \min I_1})\cup (\cup_{k=1}^\infty I_k)$, 
\item  $(\bbZ_{< \min I_1} \setminus F)\cup (\cup_{k=1}^\infty I_k)$ for any nonempty finite subset $F$ of $\bbZ$, 
\item 
$( \{x\in \bbZ_{< \min I_1} \,|\, x\equiv  a\modu n\})\cup (\cup_{k=1}^\infty I_k)$ 
for any two integers $a,n$ with $n\neq 0$,
\end{enumerate}
admits a minimal complement and no minimal asymptotic complement in $\bbZ$. 
\end{theoremIntro}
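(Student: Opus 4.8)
The plan is to establish the two halves separately: first exhibit a minimal complement to each of the four sets, and then show that no minimal asymptotic complement can exist. Write $W = \cup_{k=1}^\infty I_k$ for (1), and note that in cases (2)--(4) the set $W$ differs from this by adjoining a subset of $\bbZ_{<\min I_1}$; since $\bbZ_{<\min I_1}$ together with the intervals $I_k$ already covers $\bbZ_{<\max I_1}$ and every $I_k$, the sumset behaviour on the "negative" side is easy to control, and the crux in all four cases is the structure of $\cup_k I_k$ on $\bbZ_{\geq \min I_1}$. The key numerical input is conditions (ii) and (iii): the gaps $J_k$ between consecutive intervals $I_k, I_{k+1}$ grow without bound, but so do the intervals $I_k$ themselves, and the growth of $\#I_k$ must be leveraged to cover the gaps $J_k$ by translating $W$.

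For the \emph{minimal complement}, I would construct an explicit set $C$. On the nonnegative side, for each $k$ pick one carefully chosen translate amount $c_k$ so that $I_k + c_k$ covers the gap interval $J_k$ (or a suitable piece of it); this is possible precisely because $\#I_k \to \infty$ and the gaps, while growing, can be covered greedily if we also allow using several of the $I_j$'s with $j$ near $k$ — here I would use that $\#I_k$ strictly increasing forces $\#I_k \geq \#I_1 + (k-1) \geq k$, and more importantly that a finite union of consecutive $I_j$'s, translated appropriately, spans a long block. To get a genuine complement of all of $\bbZ$ (not just asymptotically), on the negative side I add a single well-chosen element (using $\bbZ_{<\min I_1}$-type sets in cases (2)--(4), or a ray-covering trick in case (1)) so that $W+C \supseteq \bbZ_{<\min I_1}$. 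Then minimality: for each $c\in C$ I must show some integer is covered \emph{only} via $c$; I would arrange the construction so that each $c_k$ is the unique translate reaching, say, $\max J_k$, by making the translates sparse enough relative to the lengths $\#I_k$ and $\#J_k$. This bookkeeping — choosing the $c_k$ so that $C$ is a complement yet every element of $C$ is indispensable — is the part requiring the most care, and it is where conditions (ii), (iii) are used essentially.

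For the \emph{nonexistence of a minimal asymptotic complement}, I would argue by contradiction: suppose $C$ is an asymptotic complement to $W$, so $W+C = \bbZ\setminus F$ for some finite $F$. The idea is that $C$ is necessarily "too redundant": because the intervals $I_k$ are long and grow, once $n$ is large, $n$ lies in $W+c$ for many different $c\in C$ simultaneously, so removing any single $c$ still leaves $W+(C\setminus\{c\})$ cofinite. Concretely, fix $c\in C$; the integers that are covered by $c$ but by no other element of $C$ would have to lie in $(W+c)$ avoiding $\cup_{c'\neq c}(W+c')$. Since $W$ contains arbitrarily long intervals $I_k$ and the complement $\bbZ\setminus W$ consists of the gaps $J_k$ whose relative density (compared to the surrounding $I_k, I_{k+1}$) does not vanish but whose \emph{length} is dwarfed by nearby $I_j$'s, one shows that for any $c$ there are only finitely many such "exclusive" integers — in fact I expect to show that any $n$ large enough with $n - c \in I_k$ for some large $k$ also satisfies $n - c' \in W$ for some $c'\in C$, $c'\neq c$, because $C$ must be infinite (a finite $C$ cannot give a cofinite sumset with $W$ having infinitely many gaps) and some other element $c'$ of $C$ places $n$ inside a long interval. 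Thus $C\setminus\{c\}$ is again an asymptotic complement, contradicting minimality.

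The main obstacle I anticipate is making the last paragraph's redundancy argument quantitatively tight: one must rule out a pathological asymptotic complement $C$ that is spread out so cleverly that each $c$ has infinitely many exclusive integers. The resolution should come from combining (a) $C$ infinite, (b) for large $k$, $\#I_k$ exceeds the total length $\#J_{k-1}+\#J_k$ of the adjacent gaps (a consequence of (ii), (iii) via a counting/growth comparison, possibly after passing to a subsequence or using that both $\#I_k$ and $\#J_k$ are strictly increasing so $\#I_{k}\geq k$ and the gaps cannot outpace the intervals on both sides), and (c) a pigeonhole argument on how the (infinitely many) elements of $C$ must distribute relative to the blocks $I_k\cup J_k$ in order to cover all but finitely many integers. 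I would also invoke Lemma~\ref{Lemma:EventuPeriNoMinAsymCom} as a template for the style of the nonexistence argument, adapting it from the eventually periodic case to the present "lacunary-gap" setting.
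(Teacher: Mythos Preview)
Your two-halves plan is sound, but each half diverges from the paper in a way worth flagging.

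\textbf{Minimal complement.} You propose to build $C$ by hand and then verify minimality element by element. The paper does none of this: it simply observes that set (1) satisfies the hypothesis of Theorem~\ref{CY12Thm2}(a) (the gaps $J_k$ have unbounded length by (iii)), while sets (2)--(4) are unbounded above and below and hence fall under Theorem~\ref{CY12Thm1}. Both are results of Chen--Yang already stated in the paper, so this half is a one-line citation. Your explicit construction is not wrong, but it is considerable extra work for no gain, and the ``each $c_k$ is indispensable'' verification you sketch would itself require care comparable to re-proving those theorems.

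\textbf{No minimal asymptotic complement.} Here there is a genuine gap. Your anticipated obstacle~(b), that ``for large $k$, $\#I_k$ exceeds $\#J_{k-1}+\#J_k$'', is \emph{not} a consequence of (ii) and (iii): nothing prevents, say, $\#I_k=k$ and $\#J_k=2^k$. So the growth comparison you plan to lean on is simply unavailable, and the pigeonhole argument you outline has no foothold. The paper's mechanism is different and much cleaner: since the $J_k$ are unbounded, any asymptotic complement $C$ is infinite; pick any three elements $a<b<c$ in $C$. Because $\#I_k\geq k$ (from (ii)), for every $k\geq c-a$ the interval $I_k$ is long enough that
\[
(b-a)+I_k \;\subseteq\; \{0,\,c-a\}+I_k,
\]
hence $b+I_k\subseteq\{a,c\}+I_k\subseteq (C\setminus\{b\})+W$. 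Thus $b+W$ agrees with $(C\setminus\{b\})+W$ outside a finite set (the tail pieces in cases (2)--(4) are handled by the trivial inclusion $b+\bbZ_{\leq x}\subseteq c+\bbZ_{\leq x}$, and in case (4) one first chooses $a,b,c$ congruent modulo $n$). So the \emph{middle} element $b$ is always removable, and no $C$ can be minimal. The point is that you never need to compare $\#I_k$ with $\#J_k$; you only need $\#I_k\to\infty$ and three elements of $C$ to sandwich with.
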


\begin{theoremIntro}
\label{Thm.D}
Let $W$ be a bounded below infinite subset of $\bbZ$ such that the limit of the difference of consecutive elements of $\bbZ_{\geq 1} \setminus W$ is equal to $\infty$. Then $W$ neither admits a minimal complement, nor admits a minimal asymptotic complement. 
\end{theoremIntro}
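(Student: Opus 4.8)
The plan is to normalise $W$ and then study, for a given complement $C$, the integers that are ``covered only once''. Since translating $W$ affects neither the hypothesis nor the family of minimal complements and minimal asymptotic complements of $W$, I would first assume $W\subseteq\bbZ_{\geq 1}$. The hypothesis forces $M:=\bbZ_{\geq 1}\setminus W=\{m_1<m_2<\cdots\}$ to be infinite with $m_{j+1}-m_j\to\infty$, and then $W=\bbZ_{\geq 1}\setminus M$, so an integer $x\geq 1$ lies outside $W$ exactly when $x\in M$. For a nonempty $C\subseteq\bbZ$ and $c\in C$ set
\[
E_C(c):=\{\,n\in\bbZ \;:\; n-c\in W\ \text{and}\ n-c'\notin W\ \text{for every}\ c'\in C\setminus\{c\}\,\}.
\]
Because $\bbZ\setminus(W+(C\setminus\{c\}))=(\bbZ\setminus(W+C))\sqcup E_C(c)$, when $C$ is a complement the set $C\setminus\{c\}$ fails to be a complement precisely when $E_C(c)\neq\emptyset$, and when $C$ is an asymptotic complement the set $C\setminus\{c\}$ fails to be an asymptotic complement precisely when $E_C(c)$ is infinite. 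Finally, since $W\subseteq\bbZ_{\geq 1}$, any (asymptotic) complement $C$ of $W$ is infinite and unbounded below.

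Next I would prove the key estimate: for every asymptotic complement $C$ of $W$ and every $c_0\in C$, the set $E_C(c_0)$ is \emph{finite}. Fix $c_1<c_2$ in $C\setminus\{c_0\}$ and put $\delta:=c_2-c_1$. If $n\in E_C(c_0)$ with $n>c_2$, then $n-c_1>n-c_2\geq 1$ and neither $n-c_1$ nor $n-c_2$ lies in $W$, so both lie in $M$ and differ by $\delta$; since only finitely many $j$ satisfy $m_{j+1}-m_j\leq\delta$, any two elements of $M$ at distance $\delta$ lie below a constant $B_\delta$, whence $n\leq B_\delta+c_2$. As also $n\geq c_0+1$ (from $n-c_0\in W\subseteq\bbZ_{\geq 1}$), $E_C(c_0)$ is finite. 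In particular no asymptotic complement of $W$ is minimal, which proves the asymptotic assertion. If moreover $C$ is a \emph{complement} which is unbounded above, then, keeping $c_1<c_2$ fixed so that the bound $n\leq B_\delta+c_2$ is uniform, I would pick $c_0\in C$ with $c_0>B_\delta+c_2$ (hence $c_0\notin\{c_1,c_2\}$); then $E_C(c_0)=\emptyset$, so $C\setminus\{c_0\}$ is still a complement and $C$ is not minimal.

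The remaining case is a complement $C$ bounded above, with $c^*:=\max C$. If $E_C(c^*)=\emptyset$ we finish as before, so assume there is $n^*\in E_C(c^*)$. Every $c'\in C\setminus\{c^*\}$ satisfies $c'<c^*<n^*$ and $n^*-c'\notin W$ with $n^*-c'\geq 1$, hence $n^*-c'\in M$; thus $C\setminus\{c^*\}\subseteq n^*-M$, say $C\setminus\{c^*\}=\{n^*-m_j:j\in S\}$ with $S\subseteq\bbN$ infinite. Put $j_0:=\min S$ and $c_0:=n^*-m_{j_0}\in C$. If $n\in E_C(c_0)$, then for each $j\in S\setminus\{j_0\}$ the integer $n-(n^*-m_j)=m_j+(n-n^*)$ must avoid $W$; for large $j$ it is $\geq 1$, so it must lie in $M$, and since the consecutive gaps of $M$ tend to infinity this can hold for arbitrarily large $j$ only when $n-n^*=0$. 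But then $n-c_0=m_{j_0}\in M$, contradicting $n-c_0\in W$. Hence $E_C(c_0)=\emptyset$, so $C\setminus\{c_0\}$ is still a complement and $C$ is not minimal. Therefore $W$ admits no minimal complement, completing the proof.

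I expect the main obstacle to be this last step. For asymptotic complements it suffices to know $E_C(c_0)$ is finite, which is a soft distance/gap count; for exact complements one must instead produce an element whose exclusive set is genuinely \emph{empty}, and the case of a complement bounded above is where this bites: there one is forced to exploit the rigidity of a set with gaps tending to infinity — it cannot contain a copy of one of its own tails shifted by a nonzero amount — in order to pin down the structure of $C$ and exhibit a truly redundant element.
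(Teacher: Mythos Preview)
Your argument is correct. For the asymptotic half it is close in spirit to the paper's: both select three elements of an asymptotic complement and use the diverging gaps of $M=\bbZ_{\geq 1}\setminus W$ to show that one of them is redundant up to a finite set; the paper does this by covering the long intervals $(v_i,v_{i+1})\subseteq W$ with $\{a,c\}$-translates, whereas you phrase it dually via the exclusive set $E_C(c_0)$ and the observation that pairs in $M$ at a fixed distance $\delta$ are bounded. The substantive difference is the minimal-complement half: the paper simply invokes the Chen--Yang result (Theorem~\ref{CY12Thm2}(b)), while you prove it from scratch. This costs you the extra case split on whether $C$ is bounded above---where you must use that a set with gaps tending to infinity contains no nontrivial shift of one of its own tails---but it makes the whole argument self-contained and uniform, since both conclusions then flow from the same finiteness estimate for $E_C(c_0)$.
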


Note that if a subset $C$ of a group $G$ is a minimal asymptotic complement to a nonempty subset $W$ of $G$, then for any $g\in G$, the set $C$ (resp. $g^\mo C$) is a minimal asymptotic complement to $gW$ (resp. $Wg$). Thus any statement (for instance, the above results) about the existence of a minimal asymptotic complement of a particular subset $W$ of a group $G$ also remains valid for any of its left or right translates.

\section{Primer on Minimal complements}
In this section, we collect some known results on the existence of minimal complements in $\mathbb{Z}$ which will help us in comparing with minimal asymptotic complements in the subsequent sections. Nathanson showed that any nonempty finite subset of $\mathbb{Z}$ admits a minimal complement. In fact, he showed the following stronger result.

\begin{theorem}[{\cite[Theorem 8]{NathansonAddNT4}}]
\label{Thm.2.1}
Let $W$ be a nonempty, finite subset of the integers $\bbZ$. Every complement to $W$ in $\bbZ$ contains a minimal complement to $W$.
\end{theorem}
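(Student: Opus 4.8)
The plan is to adapt Nathanson's interval-based construction for minimal complements of finite sets $W\subseteq\bbZ$. Write $W=\{w_1<w_2<\dots<w_k\}$, set $d=\mathrm{diam}(W)=w_k-w_1$, and let $C$ be an arbitrary complement to $W$, so $W+C=\bbZ$. The idea is to thin out $C$ along long blocks while keeping the covering property, and to do so in a way that leaves no further removable element. First I would partition $\bbZ$ into consecutive blocks of length exactly $\ell$ for a suitably chosen $\ell$ (larger than $d$, and chosen so the blocks interact nicely with $W$), say $B_j=[j\ell,(j+1)\ell-1]$ for $j\in\bbZ$. For each block $B_j$, since $W+C\supseteq B_j$ and $W$ has diameter $d<\ell$, there must exist elements of $C$ lying in a bounded neighbourhood of $B_j$ whose $W$-translates cover $B_j$; I would select a minimal (finite) subset $C_j\subseteq C$ with $W+C_j\supseteq B_j$, and set $C'=\bigcup_{j}C_j$. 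Then $C'\subseteq C$ is still a complement to $W$.

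Next I would prove that $C'$ (or a further refinement of it) is a minimal complement, i.e. that no single element can be deleted. The key point is the minimality of each $C_j$: for every $c\in C_j$ there is some integer $n_c\in B_j$ that is covered by $W+c$ but by no other element of $W+C_j$. The difficulty is that $n_c$ might still be covered by some $W+c'$ with $c'\in C_{j'}$ for a neighbouring block $j'\ne j$, since translates by $W$ reach up to distance $d$ outside a block. To handle this I would take $\ell$ large compared to $d$ and restrict attention to the ``core'' of each block, an inner interval $B_j^\circ=[j\ell+d,(j+1)\ell-1-d]$; any $n\in B_j^\circ$ covered by $W+c'$ forces $c'$ to lie within distance $d$ of $B_j^\circ$, hence $c'\in B_j$ essentially, so $c'\in C_j$ by a careful choice of which elements of $C$ fall into which $C_j$ (assign each relevant $c\in C$ to exactly one block). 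Arranging the witness $n_c$ to lie in the core $B_j^\circ$ is where the argument needs care: one shrinks the target from $B_j$ to $B_j^\circ$ in the selection step, picking $C_j$ minimal with $W+C_j\supseteq B_j^\circ$ rather than all of $B_j$, and then separately throwing in a fixed finite ``filler'' set (still a subset of $C$) to cover the boundary strips $B_j\setminus B_j^\circ$ — but this filler must itself not create redundancy, so instead I would cover the boundary strips by absorbing them into the cores of the construction via overlapping blocks, or equivalently choose the block decomposition so that every integer lies in the core of some block.

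The main obstacle, then, is the bookkeeping at block boundaries: ensuring simultaneously that (a) $C'$ covers all of $\bbZ$, and (b) each $c\in C'$ has a private witness $n_c$ not covered by any other element of $C'$, including elements coming from adjacent blocks. I expect to resolve this by choosing a sequence of block lengths tending to infinity (or just a single large $\ell$), using the finiteness of $W$ so that the boundary ``reach'' $d$ is fixed while the block cores grow, and by being careful to assign each element of $C$ that we retain to a unique block so that the retained set in each block is genuinely locally minimal. Once both properties are in place, Zorn's lemma or a direct greedy deletion argument finishes: starting from $C'$ and removing redundant elements one at a time, the process stabilizes at a minimal complement contained in $C$, because the private-witness property is preserved under the deletions we are forced to avoid. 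A cleaner alternative I would also consider is to invoke the general principle that a complement admits a minimal subcomplement whenever one can exhibit, for a cofinal family of finite ``windows'', locally minimal covering subsets that glue consistently — which is essentially the structure the block argument provides.
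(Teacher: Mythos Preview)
The paper does not supply its own proof of this statement; Theorem~\ref{Thm.2.1} is quoted in Section~2 as a known result from \cite[Theorem~8]{NathansonAddNT4}, with no argument given. So there is nothing in the paper to compare your proposal against.

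On the proposal itself: the block decomposition is unnecessary machinery, and the boundary bookkeeping you identify as ``the main obstacle'' is never actually resolved---you list several possible repairs (cores, overlapping blocks, variable block lengths, fillers) without committing to one or verifying that it works. The substantive content of the theorem is far simpler and is something you only mention in passing at the very end. Because $W$ is finite, every integer $n$ has at most $|W|$ representations $n=w+c$ with $w\in W$ and $c\in C$; that is, the set $C\cap(n-W)$ of elements of $C$ covering $n$ is finite. This single observation makes a direct greedy deletion (or Zorn's lemma on the family of subcomplements of $C$, ordered by reverse inclusion) succeed on $C$ with no preprocessing at all. Enumerate $C=\{c_1,c_2,\dots\}$; at step $i$ discard $c_i$ if the remaining set is still a complement. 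The intersection $C'$ of the resulting decreasing sequence is a complement because for each $n$ the finitely many covers in $C$ are all examined after finitely many steps and at least one must survive; and $C'$ is minimal because any $c_j\in C'$ was retained precisely because removing it at stage $j$ would have uncovered some integer, and since $C'\subseteq C_{j-1}$ that same witness shows $c_j$ is essential in $C'$. Your block apparatus neither shortcuts nor feeds into this argument; it only manufactures the boundary difficulties you then struggle to repair.
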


Chen--Yang were the first to give conditions on infinite sets admitting minimal complements and also not admitting minimal complements in $\mathbb{Z}$. They proved the following results.

\begin{theorem}[{\cite[Theorem 1]{ChenYang12}}]
\label{CY12Thm1}
Let $W$ be a set of integers such that $\inf W = -\infty$ and $\sup W = +\infty$. Then $W$ admits a minimal complement in $\bbZ$.
\end{theorem}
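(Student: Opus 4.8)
The plan is to construct a minimal complement $C$ directly, by an inductive (greedy) procedure that secures the two defining requirements separately and permanently. It is convenient to first reformulate minimality: $C$ is a minimal complement to $W$ precisely when $W+C=\bbZ$ and, for every $c\in C$, the translate $c+W$ is not contained in $\bigcup_{c'\in C\setminus\{c\}}(c'+W)$; equivalently, each $c\in C$ admits a \emph{private witness} $m_c\in\bbZ$ with $m_c-c\in W$ but $m_c-c'\notin W$ for all $c'\in C\setminus\{c\}$. Indeed, removing $c$ from $C$ then leaves $m_c$ uncovered, so $C\setminus\{c\}$ is not a complement. Thus the construction must simultaneously guarantee global coverage $W+C=\bbZ$ and attach to every element of $C$ a witness that remains exclusively covered for the \emph{whole} final set $C$.

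First I would enumerate the targets to be covered in order of increasing absolute value, $0,1,-1,2,-2,\dots$, and build a nested chain of finite sets $C_0\subseteq C_1\subseteq\cdots$ together with an injective assignment $c\mapsto m_c$. At a stage whose least unprocessed target $n$ is already covered by the current set, I leave it unchanged. When $n$ is uncovered, I am forced to adjoin a new element $c^\ast\in n-W$ (covering $n$ means exactly $n-c^\ast\in W$). The clean feature of this bookkeeping is that at the very instant $n$ becomes covered, it is covered \emph{only} by $c^\ast$, since $n$ was uncovered a moment before; hence $n$ itself is an admissible witness and I set $m_{c^\ast}:=n$. In this way every newly placed element is born with a momentarily private witness, and global coverage of $\bbZ$ is automatic in the limit because each integer is processed and, if still uncovered, forces an element covering it.

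The main obstacle, and the only place where the hypotheses $\inf W=-\infty$ and $\sup W=+\infty$ are genuinely used, is keeping each witness private \emph{forever} while never stalling. Adjoining $c^\ast$ must not retroactively cover an earlier witness, i.e.\ I need $m-c^\ast\notin W$ for all previously assigned witnesses $m$; since $c^\ast$ ranges over $n-W$, this means I must locate $w\in W$ with $(m-n)+w\notin W$ for each of the finitely many earlier witnesses $m$, that is, a point $w\in W\setminus\bigcup_m\bigl(W+(n-m)\bigr)$. For highly structured $W$ such a point can genuinely fail to exist (for a union of residue classes modulo $d$ a shift by a multiple of $d$ fixes $W$), so the delicate part of the argument is to show the step never actually stalls. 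I expect the resolution to split along structure: if $W$ is a finite union of residue classes, it is covered by finitely many translates and the process terminates after finitely many additions, before any obstruction can arise; otherwise, because $W$ extends arbitrarily far in \emph{both} directions, no single nonzero shift $W+(n-m)$ swallows $W$, and I have the freedom to hunt for the required $w$ (and, if needed, to relocate a witness to a fresh far-out position supplied by $\sup W=+\infty$ or $\inf W=-\infty$) so as to dodge all finitely many forbidden shifted copies at once. Once this selection step is formalised so that it is always executable, the union $C=\bigcup_k C_k$ satisfies $W+C=\bbZ$ and every $c\in C$ keeps its private witness $m_c$, whence $C$ is a minimal complement. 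The bulk of the work, and the real subtlety, lies in making that no-stall step rigorous in the unstructured case.
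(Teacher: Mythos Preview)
The paper does not supply a proof of this statement: Theorem~\ref{CY12Thm1} is quoted verbatim from Chen--Yang \cite{ChenYang12} as background in Section~2 and is invoked only as a black box later on. There is therefore no ``paper's own proof'' to compare your attempt against; any assessment has to be on the merits of your sketch alone (or against the original Chen--Yang argument).

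On those merits: the architecture you set up---a greedy enumeration that adjoins a new element $c^\ast$ whenever the current target $n$ is uncovered, and immediately assigns $n$ as a private witness for $c^\ast$---is sound and is indeed the shape of Chen--Yang's construction. But you have correctly isolated the crux and then left it unresolved. The ``no-stall'' step requires finding $w\in W$ with $w+(m_i-n)\notin W$ for each earlier witness $m_i$, i.e.\ an element of $W\setminus\bigcup_i\bigl(W-(m_i-n)\bigr)$, and your justification for why this set is nonempty is only heuristic. The dichotomy you propose is not watertight: for example $W=\bbZ_{\leq 0}\cup 2\bbZ_{>0}$ is not a finite union of residue classes, yet $W\subseteq W+2$, so the assertion that ``no single nonzero shift swallows $W$'' in the unstructured case is false as stated. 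On the periodic side, the claim that the process ``terminates after finitely many additions, before any obstruction can arise'' is plausible but unproven; and the fallback of ``relocating a witness to a fresh far-out position'' is only gestured at---one would have to show such a relocation never collides with the other existing witnesses. In short, you have a correct outline whose decisive lemma is missing, and the missing part is precisely where the two-sided unboundedness hypothesis does its work in \cite{ChenYang12}.
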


\begin{theorem}[{\cite[Theorem 2]{ChenYang12}}]
\label{CY12Thm2}
Let $W = \{1 = w_{1} < w_{2} < \cdots \}$ be a set of integers and
$\overline W = \bbZ_{\geq 1} \setminus W = \{ \bar w _1< \bar w_2 < \cdots \}$.
\begin{enumerate}[(a)]
\item If $\limsup_{i\rightarrow +\infty}(w_{i+1} - w_{i}) = +\infty$, then there exists a minimal complement to $W$.
\item  If $\lim_{i\rightarrow \infty}( \bar{w}_{i+1} - \bar{w}_{i}) = +\infty,$ then there does not exist a minimal complement to $W$. 
\end{enumerate}	
\end{theorem}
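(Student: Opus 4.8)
The plan is to handle the two parts separately, using throughout the following reformulation of minimality. For a complement $C$ of $W$ and an element $c\in C$, call an integer $y$ \emph{private to $c$} if $y-c\in W$ while $y-c'\notin W$ for every $c'\in C\setminus\{c\}$; equivalently, $y\in W+c$ but $y\notin W+c'$ for all other $c'$. Then $C\setminus\{c\}$ is a complement exactly when $c$ has \emph{no} private integer, so $C$ is a minimal complement precisely when every element of $C$ admits a private integer. Since $W\subseteq\bbZ_{\ge1}$ is bounded below while $W+C=\bbZ$, no single translate $W+c$ equals $\bbZ$, so $C$ must be unbounded below, hence infinite with at least two elements; I will use this freely.

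For part (b) I would argue by contradiction. Suppose $C$ is a minimal complement and choose two distinct elements $c_1,c_2\in C$ together with private integers $y_1,y_2$ for them. These are distinct: a private integer of $c_1$ is covered by $c_1$ alone, so it cannot also be private to $c_2\ne c_1$. Put $\delta=|y_1-y_2|>0$. Now select $d\in C$ with $d<\min(y_1,y_2)$, $d\notin\{c_1,c_2\}$, and $d$ as negative as we please, using that $C$ is unbounded below. Since $y_i$ is private to $c_i$ and $d\ne c_i$, the element $d$ does not cover $y_i$; thus $y_i-d\notin W$, and as $y_i-d\ge1$ this forces $y_i-d\in\overline W$. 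Hence $y_1-d$ and $y_2-d$ are two elements of $\overline W$ at distance exactly $\delta$. Letting $d\to-\infty$ drives both of these holes to $+\infty$ while keeping their mutual distance fixed at $\delta$; but $\lim_i(\bar w_{i+1}-\bar w_i)=\infty$ means that sufficiently large consecutive elements of $\overline W$ are more than $\delta$ apart, so no two large elements of $\overline W$ can lie at distance $\delta$. This contradiction shows that $W$ admits no minimal complement. (If $\overline W$ happens to be finite the same step is even quicker, since then $y_i-d$ could not be a hole at all.)

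For part (a) I would build a minimal complement directly, reducing minimality to a condition on differences. Because $1=w_1\in W$, for any $C$ and any $c\in C$ the integer $c+1$ lies in $W+c$, while the only other translates that can contain $c+1$ are those $W+c'$ with $c'<c$. Hence $c+1$ is private to $c$ as soon as $c+1-c'\notin W$ for every $c'\in C$ with $c'<c$; equivalently, no positive difference $c-c'$ lies in $W-1:=\{w-1 : w\in W\}$. Consequently \emph{any} complement of $W$ all of whose positive differences avoid $W-1$ is automatically minimal, with $c+1$ a private integer for each $c$. I would construct such a complement greedily: enumerate $\bbZ$, maintain a finite, strictly decreasing list of chosen elements, and whenever the current integer $n$ is not yet covered, adjoin a new element $c=n-w$ with $w\in W$. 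I always take $c$ to be smaller than every element chosen so far, so the only new differences are $c'-c$ with $c'$ ranging over the finitely many previous elements. Protecting the earlier private integers $c'+1$ amounts to requiring $c'+1-c\notin W$ for each such $c'$, i.e. the finitely many values $\{c'+1-c\}$ — a cluster of fixed diameter located far out near $-c$ — must all fall inside a single gap of $W$. Since $\limsup_i(w_{i+1}-w_i)=\infty$ furnishes gaps of $W$ that are arbitrarily long and arbitrarily far out, such a placement is available, and meanwhile $c+1$ is private to $c$ for free because no chosen element lies below it.

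The step I expect to be the main obstacle is exactly this coordination in part (a): simultaneously (i) covering the prescribed integer $n$, which constrains $c$ to lie in $n-W$, and (ii) pushing the finitely many collision values $c'+1-c$ into one gap of $W$, which preserves every earlier private integer. The hypothesis $\limsup_i(w_{i+1}-w_i)=\infty$ supplies gaps that are both long enough to absorb the growing cluster and far enough out to leave a whole interval of admissible representatives $w\in W$ from which (i) can also be satisfied; but organizing the enumeration so that every integer is eventually covered while no private integer is ever destroyed, and checking that a valid $w\in W$ genuinely exists at each stage, is the technical heart of the argument and is where one needs arbitrarily large gaps rather than a single one. Part (b), by contrast, is comparatively soft once the private-integer reformulation is in place.
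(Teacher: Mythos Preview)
The paper does not contain a proof of this statement: Theorem~\ref{CY12Thm2} is quoted from \cite{ChenYang12} in Section~2 as background material, with no argument supplied. So there is nothing in the paper to compare your proposal against; what follows is an assessment of the proposal on its own.

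Your treatment of part~(b) is correct and efficient. The key observation---that for any two private integers $y_1,y_2$ attached to distinct $c_1,c_2$, every sufficiently negative $d\in C\setminus\{c_1,c_2\}$ produces two elements $y_1-d,\,y_2-d$ of $\overline W$ at the fixed distance $|y_1-y_2|$---immediately contradicts $\bar w_{i+1}-\bar w_i\to\infty$.

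For part~(a), your reduction is valid: if every positive difference $c'-c$ in $C$ avoids $W-1$, then $c+1$ is private to each $c\in C$, so $C$ is minimal. However, the greedy construction remains a sketch with a genuine gap, and the gap is precisely the one you identify. At each stage you must choose $c$ satisfying two constraints simultaneously: $n-c\in W$ (to cover $n$) and $c'-c+1\notin W$ for every previously chosen $c'$ (to protect their private integers). Writing $m=n-c$, this asks for $m\in W$ large with $m+(c'_j-n+1)\notin W$ for finitely many fixed shifts $c'_j-n+1$ of mixed signs. Taking $m$ to be an endpoint $w_i$ or $w_{i+1}$ of a large gap handles the shifts of one sign but not the other, so a single arbitrarily large gap does not by itself resolve the coordination. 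One genuinely needs a further argument---for instance, exploiting several large gaps, or a more careful placement relative to the gap---to guarantee a common solution at every step. As written, the proposal for~(a) is an outline rather than a proof; for the complete construction you should consult \cite{ChenYang12}.
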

 
Later, Kiss--S\'andor--Yang \cite{KissSandorYangJCT19} introduced the notion of eventually periodic sets and gave necessary and sufficient conditions for them to have minimal complements or not.

\begin{definition}
\label{Def.2.4}
Let $A$ be a nonempty bounded below subset of the set of integers $\bbZ$. If there exists a positive integer $T$ such that $a + T \in A$ for all sufficiently large integers $a \in A$, then $A$ is called \textnormal{eventually periodic with period} $T$.
\end{definition}

However, we shall see in the following sections that minimal asymptotic complements might behave quite differently.

\section{Minimal asymptotic complement of structured sets}
\label{Sec:Struc}

In this section we prove Theorem \ref{Thm.A} and Theorem \ref{Thm.B}. We also show that no eventually periodic subset of $\mathbb{Z}$ admits a minimal asymptotic complement in $\mathbb{Z}$ (see Lemma \ref{Lemma:EventuPeriNoMinAsymCom}).

\begin{proof}
[Proof of Theorem \ref{Thm.A}]
Suppose $W$ is finite and nonempty. To prove Theorem \ref{Thm.A}(1), it is enough to show that for any finite subset $F$ of any asymptotic complement $C$ of $W$ in $G$, the set $C\setminus F$ is also an asymptotic complement of $W$ in $G$. Indeed, for such a finite set $F$, the set $WC$ is equal to $(W(C\setminus F)) \cup WF$. Since $G\setminus WC$ and $WF$ are finite, it follows that $C\setminus F$ is also an asymptotic complement of $W$ in $G$. This proves Theorem \ref{Thm.A}(1). 

Suppose $G\setminus W$ is finite and nonempty. Note that there exist elements $g_1, g_2, \cdots, g_n$ in $G$ such that $G\setminus W$ is equal to $\{g_1, \cdots, g_n\}$. Since $G$ is infinite and $G\setminus W$ is finite, it follows that $W$ is infinite. So $W$ contains an element $x$ which is not of the form $g_i g_j^\mo$ for some $i, j$. Thus for each $i$, the element $x^\mo g_i$ does not belong to $\{g_1, \cdots, g_n\} = G\setminus W$, i.e., $x^\mo g_i$ belongs to $W$. So $xW$ contains $g_1, g_2, \cdots, g_n$. Hence $\{e, x\}$ is a minimal complement to $W$.  
Since $G\setminus W$ is finite, it is follows that any singleton subset of $G$ is an asymptotic complement of $W$. 
\end{proof}

We contrast Theorem \ref{Thm.A}(1) with Theorem \ref{Thm.2.1} and \cite[Theorem A]{MinComp1} which ensures the existence of minimal complements of nonempty finite sets in arbitrary groups.

\begin{proof}
[Proof of Theorem \ref{Thm.B}]
First, we prove that any asymptotic complement of an infinite subgroup $H$ of $G$ contains a minimal asymptotic complement.
Let $H$ be an infinite subgroup of a group $G$. Let $C$ be an asymptotic complement of $H$ in $G$. Consider the equivalence relation $\sim$ on $C$ defined by: $c_1 \sim c_2$ if $c_1c_2^\mo \in H$. Let $C'$ denote a nonempty subset of $C$ consisting of pairwise inequivalent elements such that each element of $C$ is equivalent to some element of $C'$. Note that $HC$ is equal to $HC'$, and $HC$ is the union of a collection of right cosets of $H$. Since $G\setminus HC$ is finite and $H$ is infinite, it follows that $HC$ is equal to $G$ (this is clear from the fact that $HC$ is the union of certain right cosets of $H$ and $H$ is infinite). Consequently, $C$ is a complement of $H$ in $G$. Since $HC, HC'$ are equal, the set $C'$ is a complement of $H$ in $G$. Since the elements of $C'$ are inequivalent under $\sim$, it follows that $Hx, Hy$ are disjoint for any two elements $x, y \in C'$. The equality $G  = HC'$ implies that $G$ is equal to the union of the pairwise disjoint subsets of the form $Hx$ for $x$ varying in $C'$. Since $H$ is infinite, these subsets are all infinite. Thus $G\setminus (H (C'\setminus F))$ is infinite for any nonempty finite subset $F$ of $C'$. Consequently, the set $C'$ is a minimal asymptotic complement of $H$ in $G$. 

Let $g$ be an element of $G$ and $D$ be an asymptotic complement of $gH$ in $G$. Then $D$ is also an asymptotic complement of $H$. Hence $D$ is a complement of $H$ in $G$ and it contains a minimal asymptotic complement of $H$. Consequently, $D$ is a complement of $gH$ in $G$ and it contains a minimal asymptotic complement of $gH$.

If $E$ is an asymptotic complement of $Hg$ in $G$, then $E$ is an asymptotic complement of the subgroup $g^\mo Hg$ in $G$. Hence $E$ is a complement of $g^\mo Hg$ and it contains a minimal asymptotic complement of $g^\mo H g$. So $E$ is a complement of $Hg$ and it contains a minimal asymptotic complement of $H g$.

Theorem \ref{Thm.B}(1) follows from above. 

To prove Theorem \ref{Thm.B}(2), assume that $G \setminus W$ is infinite and $W$ contains a subgroup $H$ of $G$ having finite index in $G$. Since $H$ admits a finite subset of $G$ as a complement, it follows that $W$ also admits a finite subset $C$ of $G$ as a complement and as an asymptotic complement. 
Note that the subsets of $C$ which are asymptotic complements (resp. complements) to $W$ form a nonempty finite set $\calF$ (resp. $\calF'$) which is partially ordered with respect to containment and any minimal element of $\calF$ (resp. $\calF'$) is a minimal asymptotic complement (resp. minimal complement) of $W$ in $G$. Since each of the partially ordered sets $\calF, \calF'$ is finite and nonempty, they contain minimal elements. So $W$ admits a minimal complement and a minimal asymptotic complement in $G$. Consequently, the union of a nonzero subgroup of $\bbZ$ and a finite subset of $\bbZ$ admits a minimal complement. 
\end{proof}

\begin{lemma}
\label{Lemma:EventuPeriNoMinAsymCom}
Let $W$ be an eventually periodic subset of $\bbZ$ (see Definition \ref{Def.2.4}). Then $W$ admits no minimal asymptotic complement in $\bbZ$. 
\end{lemma}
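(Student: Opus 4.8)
The plan is to prove the following stronger assertion, from which the lemma follows immediately: if $C$ is \emph{any} asymptotic complement to $W$ in $\bbZ$, then there is an element $c_0\in C$ such that $C\setminus\{c_0\}$ is still an asymptotic complement to $W$ (so no asymptotic complement can be minimal). The underlying idea is that eventual periodicity forces the cover $W+C$ to be redundant along an arithmetic progression, so deleting one carefully chosen element of $C$ changes $W+C$ only in finitely many places.

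First I would record two easy reductions. Since $W$ is bounded below (built into Definition \ref{Def.2.4}), an asymptotic complement $C$ cannot be bounded below, for otherwise $W+C$ would be bounded below and hence not cofinite; in particular $C$ is infinite. Next, unwinding Definition \ref{Def.2.4}, I would fix a period $T\geq 1$, a threshold $M\in\bbZ$, and a set of residues $R\subseteq \bbZ/T\bbZ$ (possibly empty, if $W$ happens to be finite) such that for every integer $n\geq M$ one has $n\in W$ if and only if $(n\bmod T)\in R$; this is the routine step, using that once an element $a\in W$ with $a$ large lies in $W$, so does the whole progression $a,a+T,a+2T,\dots$. Note also that $W_{<M}$ is finite, since $W$ is bounded below.

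The heart of the argument is the choice of $c_0$. As $C$ is infinite, some residue class modulo $T$ contains infinitely many elements of $C$; I would then pick two distinct elements $c^*<c_0$ of $C$ lying in that class, so that $c^*\equiv c_0\pmod T$ and $T\mid c_0-c^*\geq 1$. The claim to verify is that $C\setminus\{c_0\}$ is an asymptotic complement, and for this it suffices to show
\[
W+(C\setminus\{c_0\})\ \supseteq\ (W+C)\setminus(c_0+W_{<M}),
\]
since $c_0+W_{<M}$ is finite and $W+C$ is cofinite, so the right-hand side is cofinite. To prove the containment, take $n\in W+C$ with $n\notin c_0+W_{<M}$ and suppose, for contradiction, that $n\notin W+(C\setminus\{c_0\})$: then every representation of $n$ as an element of $W+C$ must have its $C$-coordinate equal to $c_0$, so $n=w+c_0$ with $w:=n-c_0\in W$. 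Because $n\notin c_0+W_{<M}$ we have $w\notin W_{<M}$, hence $w\geq M$, and therefore $(w\bmod T)\in R$. Now set $n':=n-c^*=w+(c_0-c^*)$; since $c_0-c^*\geq 1$ we get $n'\geq M$, and since $T\mid c_0-c^*$ we get $n'\equiv w\pmod T$, so $(n'\bmod T)\in R$ and hence $n'\in W$. Then $n=n'+c^*$ with $n'\in W$ and $c^*\in C\setminus\{c_0\}$, contradicting $n\notin W+(C\setminus\{c_0\})$. This proves the containment, so $C\setminus\{c_0\}$ is an asymptotic complement and $C$ is not minimal; as $C$ was arbitrary, $W$ has no minimal asymptotic complement.

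The main obstacle here is bookkeeping rather than an idea: one has to pin down exactly which integers can lose their coverage when $c_0$ is removed — namely those of the form $c_0+w$ with $w\in W$ and $w<M$ — and observe that boundedness below of $W$ makes this set finite, while the remaining integers are rescued by the ``shift by $c_0-c^*$'' which is a multiple of the period. Everything else — the existence of $c^*<c_0$ in a common residue class, and the periodicity extraction of the second paragraph — is elementary, and the argument uses the order structure of $\bbZ$ only through the hypothesis that $W$ is bounded below.
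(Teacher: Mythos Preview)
Your proof is correct and follows essentially the same approach as the paper's: both arguments use infinitude of $C$ to pick two elements of $C$ congruent modulo the period $T$ (the paper calls them $a,b$ and removes $b$; you call them $c^*<c_0$ and remove $c_0$) and then show that the shift by their difference carries all but finitely many elements of $W$ back into $W$, so removing one element changes $W+C$ only on a finite set. The paper's version is terser---it does not explicitly extract the threshold $M$ and residue set $R$, writing instead $(b-a)+(W\setminus\scrW)\subseteq W$ for a finite $\scrW$---and is phrased as a proof by contradiction, but the underlying mechanism is identical.
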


\begin{proof}
On the contrary, let us assume that there exists a minimal asymptotic complement to $W$ in $\bbZ$. 
Let $C$ be such a minimal asymptotic complement. Since $W$ is bounded below, the set $C$ is infinite. Let $T$ denote a period of $W$. So $C$ contains at least two distinct elements $a, b$ which are congruent modulo $T$. Since $W$ is eventually periodic, for some finite subset $\scrW$ of $W$, it follows that $(b-a) + (W\setminus \scrW) \subseteq W$, which implies that $b + (W\setminus \scrW)$ is contained in $a + W$. Note that 
\begin{align*}
W + C 
& = ( W + b)  \cup ( W+ (C \setminus \{b\})) \\
& = (\scrW + b) \cup ((W\setminus \scrW) + b) \cup ( W+ (C \setminus \{b\})) \\
& \subseteq (\scrW + b) \cup (W + a) \cup ( W+ (C \setminus \{b\})) \\ 
& \subseteq (\scrW + b) \cup ( W+ (C \setminus \{b\})) ,
\end{align*}
where the final containment follows since $a$ lies in $C \setminus \{b\}$. This implies that $W + C  = (\scrW + b) \cup ( W+ (C \setminus \{b\}))$. Since $\scrW$ is finite, it follows that the set-theoretic complement of $W + (C \setminus \{b\})$ in $\bbZ$ is finite. Hence $C\setminus \{b\}$ is an asymptotic complement of $W$ in $\bbZ$, which contradicts the assumption. Consequently, $W$ admits no minimal asymptotic complement in $\bbZ$. 
\end{proof}

We contrast Lemma \ref{Lemma:EventuPeriNoMinAsymCom} with the work of  Kiss--S\'andor--Yang \cite[Theorems 2,3]{KissSandorYangJCT19} who showed that there exist eventually periodic sets in $\mathbb{Z}$ admitting minimal complements.

\section{Minimal asymptotic complements of general sets}
\label{Sec:General}

In this section, we consider infinite subsets $W\subseteq \mathbb{Z}$ which have ``less" structure. We shall see that the existence of minimal asymptotic complements is a trickier concept in this scenario. 

Let $W$ be a subset of $\bbZ$ such that $\sup W = +\infty$ and $\inf W= -\infty$. Then a minimal complement of $W$ exists by Theorem \ref{CY12Thm1}. However, a minimal asymptotic complement of $W$ may or may not exist (even if we impose the condition that $\bbZ\setminus W$ contains arbitrarily large gaps), as illustrated by Lemmas \ref{Lemma:SufInfYesMac}, \ref{Lemma:SufInfNoMac}.

\begin{lemma}
\label{Lemma:SufInfYesMac}
Let $W$ denote the subset of $\bbZ$ consisting of integers which are not primes. Then $W$ admits a minimal complement and a minimal asymptotic complement. 
\end{lemma}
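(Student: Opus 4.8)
The plan is to produce, for $W = \bbZ \setminus \{\text{primes and their negatives}\}$ (or $\bbZ\setminus\{\text{primes}\}$, depending on the exact convention — I will treat the negatives symmetrically), an explicit minimal complement and an explicit minimal asymptotic complement, exploiting the fact that the primes are very sparse (by the prime number theorem, or even by the cruder fact that $\sum_p 1/p$ diverges only logarithmically) while $W$ itself contains arbitrarily long blocks of consecutive integers. I would first record the elementary observation that $\{0,1\}$ is \emph{not} a complement of $W$: for any integer $t$, one needs $t$ or $t-1$ to lie in $W$, and since two consecutive integers are never both prime except $\{2,3\}$, the set $\{0,1\}+W$ already misses almost nothing — in fact it misses only finitely many integers (the integer $t$ is excluded only if both $t$ and $t-1$ are prime, which forces $\{t-1,t\}=\{2,3\}$, and symmetrically near $-2,-3$). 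Hence $\{0,1\}$ is an \emph{asymptotic} complement of $W$, and since removing either element fails (a single translate of $W$ omits infinitely many primes), $\{0,1\}$ is already a minimal asymptotic complement. That disposes of the asymptotic half cheaply.

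For the honest minimal complement the first task is to exhibit \emph{some} complement, then prune it to a minimal one. Here I would use that $W$ contains, for every $k$, an interval of length $k$ (the interval $[n!+2, n!+n]$ consists of composites, for instance), so that $W$ together with a suitable arithmetic-progression-like set covers all of $\bbZ$; concretely, one can take $C = 2\bbZ$ or $C = p\cdot\bbZ$ for a fixed prime $p$ and check $W + C = \bbZ$: given $t\in\bbZ$, one wants some $c\in C$ with $t-c\in W$, i.e. $t-c$ not prime, and since $C$ meets every sufficiently long interval and the primes have density zero, such $c$ exists (more carefully: among $t, t-p, t-2p, \dots$ infinitely many are composite, since an infinite arithmetic progression cannot consist only of primes by Dirichlet plus counting, or simply because it contains a multiple of some fixed prime). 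So $C_0 := p\bbZ$ is a complement of $W$.

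Next I would invoke Theorem~\ref{CY12Thm1}: since $W$ satisfies $\sup W = +\infty$ and $\inf W = -\infty$, a minimal complement of $W$ exists outright, so in fact nothing beyond citing that theorem is needed for the first assertion. The remaining content of the lemma is therefore the minimal \emph{asymptotic} complement, which the paragraph above already handles via $\{0,1\}$; but I should double-check the boundary cases carefully — the only integers possibly missing from $\{0,1\}+W$ are those $t$ with $\{t-1,t\}$ both prime, forcing $\{t-1,t\} = \{2,3\}$ (so $t=3$), and if $W$ is taken to contain all nonpositive integers then no issue arises on the negative side, whereas if the problem intends "not a prime" to allow negatives one checks $t=-2$ similarly; either way the exceptional set is finite, so $\{0,1\}$ works.

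The main obstacle, such as it is, is purely bookkeeping: pinning down the exact convention for $W$ (whether negative integers and $0,1$ count as "non-prime," which they do) and verifying that the finite exceptional set in $\{0,1\}+W$ is genuinely finite rather than merely sparse — this reduces to the triviality that $2,3$ is the only pair of consecutive primes. I do not anticipate needing any deep analytic input; the prime number theorem is a convenient sledgehammer for "primes have density zero," but Euclid-style arguments suffice. I would therefore structure the write-up as: (1) cite Theorem~\ref{CY12Thm1} for the minimal complement; (2) show $\{0,1\}$ is an asymptotic complement by the consecutive-primes observation; (3) show minimality by noting each singleton translate of $W$ omits infinitely many primes; conclude.
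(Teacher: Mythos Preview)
Your proposal is correct, and for the minimal asymptotic complement your argument is exactly the paper's: exhibit $\{0,1\}$, note that $t\notin \{0,1\}+W$ forces $t-1$ and $t$ both prime, hence $t=3$, and observe that each singleton misses infinitely many primes. For the minimal complement you take a slightly different route: you invoke Theorem~\ref{CY12Thm1} (Chen--Yang, using $\inf W=-\infty$, $\sup W=+\infty$), whereas the paper instead writes down the explicit set $\{0,1,-1\}$ and leaves to the reader the check that three consecutive integers are never all prime while removing any one of $0,1,-1$ leaves $3$, $2$, or $4$ uncovered respectively. Your citation is a perfectly legitimate shortcut and is in the spirit of the paper's own Remark that the lemma also follows from Theorem~\ref{Thm.B}(2) via $4\bbZ\subseteq W$; the paper's explicit $\{0,1,-1\}$ has the minor advantage of being self-contained and of giving a concrete witness. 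Your digressions about $p\bbZ$ being a complement and about the convention for negative primes are unnecessary (the standard convention makes all nonpositive integers non-prime, so $\bbZ_{\leq 1}\subseteq W$), but they do no harm.
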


\begin{proof}
Note that $W$ has $\{0, 1, -1\}$ is a minimal complement of $W$ and $\{0,1\}$ is a minimal asymptotic complement of $W$. 
\end{proof}

\begin{remark}
The above Lemma can also be seen as a consequence of Theorem \ref{Thm.B}(2) (since $W$ contains the subgroup $4\bbZ$ and $\bbZ\setminus W$ is infinite). 
\end{remark}

On the other hand, the next lemma gives an example of a subset $W$ of $\bbZ$ with $\sup W = +\infty$ and $\inf W= -\infty$ and which admits no minimal asymptotic complement. 
 
\begin{lemma}
\label{Lemma:SufInfNoMac}
Let $W$ denote the subset of $\bbZ$ obtained by taking the union of $\bbZ_{\leq 3}$ and 
\begin{align*}
& \bigcup _{k=1}^\infty \left( ((1+2)+(2+2^2 ) + (3 + 2^3) + \cdots + (k + 2^k) ) +  [1, k+1] \right) \\
& =
\bigcup _{k=1}^\infty   \left[\dfrac{k(k+1)}{2} + 2^{k+1}-2+1, \dfrac{k(k+1)}{2} + 2^{k+1}-2+k+1 \right]  \\
& =
\bigcup _{k=1}^\infty  \left[\dfrac{(k-1)(k+2)}{2} + 2^{k+1}, \dfrac{(k-1)(k+2)}{2} + 2^{k+1} + k\right] \\
& = \{4, 5\} \cup \{10, 11, 12\} \cup \{21, 22, 23, 24\}\cup \{41, 42, 43, 44, 45\} \cup \cdots .
\end{align*}
The set $W$ admits a minimal complement in $\bbZ$. For any three elements $a, b, c$ in an asymptotic complement $C$ of $W$ with $a<b<c$, the set $C\setminus \{b\}$ is also an asymptotic complement to $W$. Moreover, $W$ admits no minimal asymptotic complement in $\bbZ$. 
\end{lemma}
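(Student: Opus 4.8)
The plan is to dispatch the three assertions in order. Write $a_k := \frac{(k-1)(k+2)}{2} + 2^{k+1}$ and $I_k := [a_k, a_k+k]$, so $W = \bbZ_{\leq 3}\cup\bigcup_{k\geq 1} I_k$ with $\# I_k = k+1$; a short computation gives $a_{k+1}-a_k = (k+1)+2^{k+1}$, so $I_k$ and $I_{k+1}$ are separated by a run of exactly $2^{k+1}$ integers not in $W$. In particular $\bbZ\setminus W$ is infinite, while $\bbZ_{\leq 3}\subseteq W$ and $a_k\to\infty$ give $\inf W = -\infty$ and $\sup W = +\infty$; hence Theorem~\ref{CY12Thm1} applies and $W$ has a minimal complement in $\bbZ$, which is the first assertion.

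The second and third assertions both rest on the following covering estimate: for any integers $a<b<c$ there is a finite set $F_0 = F_0(a,b,c)$ such that
\[
W+b \;\subseteq\; (W+a)\cup(W+c)\cup F_0 .
\]
To prove this I would split $W+b = \bbZ_{\leq 3+b}\cup\bigcup_{k\geq 1}(I_k+b)$ and treat the two parts separately. The first lies in $\bbZ_{\leq 3+c} = \bbZ_{\leq 3}+c\subseteq W+c$ since $b<c$. For the translated intervals, observe that $a<b<c$ forces $c\geq a+2$, and that for every $k\geq c-a-1$ one has $c\leq a+k+1$, so the integer intervals $I_k+a = [a_k+a,a_k+a+k]$ and $I_k+c = [a_k+c,a_k+c+k]$ abut or overlap; their union is therefore the single interval $[a_k+a,\,a_k+c+k]$, which, since $a<b<c$, contains $I_k+b = [a_k+b,a_k+b+k]$. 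Hence $I_k+b\subseteq(W+a)\cup(W+c)$ for all $k\geq c-a-1$, and we may take $F_0 = \bigcup_{1\leq k<c-a-1}(I_k+b)$, a finite set. I expect this bookkeeping --- in particular isolating the threshold $k\geq c-a-1$ that makes $(I_k+a)\cup(I_k+c)$ a genuine interval --- to be the only real content of the argument; note that it uses only $\# I_k\to\infty$, not the size of the gaps.

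Granting the estimate, the second assertion is immediate: if $C$ is an asymptotic complement to $W$ and $a<b<c$ lie in $C$, then $W+C = (W+b)\cup\big(W+(C\setminus\{b\})\big)$, and since $a,c\in C\setminus\{b\}$ the estimate gives $W+b\subseteq\big(W+(C\setminus\{b\})\big)\cup F_0$, hence $W+C\subseteq\big(W+(C\setminus\{b\})\big)\cup F_0$. As $W+C$ is cofinite and $F_0$ is finite, $W+(C\setminus\{b\})$ is cofinite, so $C\setminus\{b\}$ is again an asymptotic complement to $W$. For the third assertion it remains to note that every asymptotic complement of $W$ is infinite: only $O(\log N)$ of the intervals $I_k$ meet $[1,N]$ and each has $O(\log N)$ elements, so $\#(W\cap[1,N]) = O((\log N)^2) = o(N)$, whence for any finite $C$ the set $W+C$ meets $[1,N]$ in $o(N)$ integers and cannot be cofinite. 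Thus any asymptotic complement $C$ of $W$ has at least three elements $a<b<c$, and by the second assertion $C\setminus\{b\}$ is still an asymptotic complement; so $C$ is not minimal, and $W$ admits no minimal asymptotic complement in $\bbZ$.
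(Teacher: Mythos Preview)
Your proof is correct, and for the first two assertions it follows the paper's line essentially verbatim: the paper also invokes Theorem~\ref{CY12Thm1} for the minimal complement, and also shows that $b+W\subseteq (\{a,c\}+W)\cup(\text{finite})$ by treating the tail $\bbZ_{\leq 3}$ and the blocks $I_k$ separately, with the same interval-merging observation for large $k$ (your threshold $k\geq c-a-1$ is in fact one step sharper than the paper's $k>c-a$, and your handling of the $\bbZ_{\leq 3}$ part via $\bbZ_{\leq 3+b}\subseteq\bbZ_{\leq 3+c}$ is a bit slicker than the paper's elementwise check).

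Where you diverge is the third assertion. The paper rules out asymptotic complements of size $\leq 2$ by an explicit construction: for any $u>0$ it exhibits the infinite family $a_k+k+1+u$ (for $k$ with $2^{k+1}>u$) lying outside $\{0,u\}+W$, an argument that leans directly on the exponential gap $2^{k+1}$ between consecutive blocks. You instead run a density count, observing that $\#(W\cap[1,N])=O((\log N)^2)$ and hence $(W+C)\cap[1,N]$ has $o(N)$ elements for any finite $C$, so no finite $C$ can be an asymptotic complement. Your route is shorter and yields the stronger conclusion that every asymptotic complement is infinite (not merely of size $\geq 3$); the paper's route is more self-contained in that it pinpoints exactly which integers are missed. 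Both are valid, and your remark that the covering estimate uses only $\#I_k\to\infty$ is correct --- though note that your density step does still implicitly use the fast growth of $a_k$, since that is what makes only $O(\log N)$ blocks meet $[1,N]$.
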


\begin{proof}
The set $W$ admits a minimal complement in $\bbZ$ by Theorem \ref{CY12Thm1}. 

Note that for any two integers $u,v$ with $0 \leq u\leq v$, it follows that 
$$u+  \{x, x+1, \cdots, y\} 
\subseteq 
\{0,v\} + \{x, x+1, \cdots, y\}
$$
for any two integers $x,y$ with $y-x\geq v$.
Indeed, the set 
$$\{0,v\} + \{x, x+1, \cdots, y\}
= \{x, x+1, \cdots, y\} \cup \{v + x, v+ x+1, \cdots, v+y\}
$$
consists of the integers $k$ satisfying $x \leq k \leq v + y$ (since $v+x \leq y$) and the elements of the set 
$u+  \{x, x+1, \cdots, y\}$ lie between $x$ and $v+y$ (since $u + x \geq x, u + y \leq v+y$). 
Since $a<b<c$ are integers, 
\begin{align*}
&(b-a)+  \left[\dfrac{(k-1)(k+2)}{2} + 2^{k+1}, \dfrac{(k-1)(k+2)}{2} + 2^{k+1} + k\right]  \\
&\subseteq 
\{0,c-a\} + 
 \left[\dfrac{(k-1)(k+2)}{2} + 2^{k+1}, \dfrac{(k-1)(k+2)}{2} + 2^{k+1} + k\right] 
\end{align*}
holds for any integer $k >  c-a$. Consequently, for any integer $k>c-a$, we obtain 
\begin{align*}
&b+  \left[\dfrac{(k-1)(k+2)}{2} + 2^{k+1}, \dfrac{(k-1)(k+2)}{2} + 2^{k+1} + k\right]  \\
&\subseteq 
\{a,c\} + 
 \left[\dfrac{(k-1)(k+2)}{2} + 2^{k+1}, \dfrac{(k-1)(k+2)}{2} + 2^{k+1} + k\right] .
\end{align*}
Also note that for any $w\in W$ with $w\leq -(b-a)$, we have 
$$w + b-c \leq - (b-a) +b-c = a-c < 0,$$ 
which implies $w + b-c\in W$ (since $W$ contains $\bbZ_{\leq 3}$), and hence 
$$b+ w 
= c + (w + b-c) \in c+ W \subseteq \{a, c\} + W.$$
Consequently,
\begin{align*}
& b + \left(W \setminus \left[-(b-a), \dfrac{(c-a-1)(c-a+2)}{2} + 2^{c-a+1} + c-a \right] \right) \\
& \subseteq 
\{a,c\}+ W \\
& \subseteq 
(C\setminus \{b\} ) + W.
\end{align*} 
This implies that 
\begin{align*}
C + W 
& = 
((C\setminus \{b\} ) + W) 
\cup 
(b + W) \\
& = 
((C\setminus \{b\} ) + W)  \\
& \qquad \cup \left(b + \left(W \setminus \left[-(b-a), \dfrac{(c-a-1)(c-a+2)}{2} + 2^{c-a+1} + c-a\right] \right) \right)  \\
& \qquad \cup \left(b +  \left(W\cap \left[-(b-a), \dfrac{(c-a-1)(c-a+2)}{2} + 2^{c-a+1} + c-a\right]\right) \right) \\
& = 
((C\setminus \{b\} ) + W) \\
& \qquad \cup \left(b +  \left(W\cap \left[-(b-a), \dfrac{(c-a-1)(c-a+2)}{2} + 2^{c-a+1} + c-a\right]\right) \right).
\end{align*}
Thus the set-theoretic complement of $(C\setminus \{b\} ) + W$ in $C+W$ is a finite set. Since the set-theoretic complement of $C+W$ in $\bbZ$ is finite, it follows that the set-theoretic complement of $(C\setminus \{b\} ) + W$ in $\bbZ$ is also finite. So $C\setminus\{b\}$ is also an asymptotic complement to $W$. 

We claim that each asymptotic complement of $W$ contains at least three distinct elements. Since $\bbZ \setminus W$ is infinite, no singleton subset of $\bbZ$ is an asymptotic complement to $W$. If a two-element subset $S = \{s, t\}$ of $\bbZ$ with $s<t$ is an asymptotic complement to $W$, then $(-s) + S$ is also an asymptotic complement to $W$. Hence we may assume $S$ is equal to $\{0,u\}$ with $u>0$. Let $k_0\geq 1$ be an integer such that $2^{k_0+ 1}> u$. 
For every integer $k\geq 1$, the integer 
$\frac{(k-1)(k+2)}{2} + 2^{k+1} + k + 1 + u$
does not belong to $W+u$. 
Moreover, for any integer $k\geq k_0$, the integer 
$\frac{(k-1)(k+2)}{2} + 2^{k+1} + k + 1 + u$
does not belong to $W$. 
Otherwise, for some integer $k'\geq k_0$, the integer 
$\frac{(k'-1)(k'+2)}{2} + 2^{k'+1} + k' + 1 + u
$
belongs to 
$$
\bigcup _{k> k'}^\infty  \left[\dfrac{(k-1)(k+2)}{2} + 2^{k+1}, \dfrac{(k-1)(k+2)}{2} + 2^{k+1} + k\right],
$$
which implies 
$$
\frac{(k'-1)(k'+2)}{2} + 2^{k'+1} + k' + 1 + u
\geq 
\dfrac{(k'+1-1)(k'+1+2)}{2} + 2^{k'+1+1} 
,$$
which yields $u \geq 2^{k'+1} \geq 2^{k_0+1}$. This contradicts the inequality $2^{k_0+ 1}> u$. So for any integer $k\geq k_0$, the integer 
$\frac{(k-1)(k+2)}{2} + 2^{k+1} + k + 1 + u$
does not belong to $\{0,u\} + W = S + W$. 
Hence $S$ is not a minimal asymptotic complement to $W$. This proves the claim that each asymptotic complement of $W$ contains at least three elements. Hence $W$ admits no minimal asymptotic complement in $\bbZ$.
\end{proof}

The natural question now is what happens if the set $W$ is bounded from above or from below. In particular, let $W$ satisfy the condition of Theorem \ref{CY12Thm2}(a) (i.e. the difference of consecutive elements of $W$ is not bounded above). We shall see that even in this case minimal asymptotic complement might not exist.

\begin{lemma}
\label{Lemma:BddBelowYesMCNoMac}
Let $W$ denote the subset
\begin{align*}
& \bigcup _{k=1}^\infty \left( ((1+2)+(2+2^2 ) + (3 + 2^3) + \cdots + (k + 2^k)) +  [1, k+1] \right) \\
& =
\bigcup _{k=1}^\infty   \left[\dfrac{k(k+1)}{2} + 2^{k+1}-2+1, \dfrac{k(k+1)}{2} + 2^{k+1}-2+k+1 \right]  \\
& =
\bigcup _{k=1}^\infty  \left[\dfrac{(k-1)(k+2)}{2} + 2^{k+1}, \dfrac{(k-1)(k+2)}{2} + 2^{k+1} + k\right] \\
& = \{4, 5\} \cup \{10, 11, 12\} \cup \{21, 22, 23, 24\}\cup \{41, 42, 43, 44, 45\} \cup \cdots 
\end{align*}
of $\bbZ$. 
The set $W$ admits a minimal complement in $\bbZ$. For any three elements $a, b, c$ in an asymptotic complement $C$ of $W$ with $a<b<c$, the set $C\setminus \{b\}$ is also an asymptotic complement to $W$. Moreover, $W$ admits no minimal asymptotic complement in $\bbZ$. 
\end{lemma}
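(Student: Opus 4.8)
The plan is to prove the three assertions in turn; the argument is a simpler variant of the one used for Lemma~\ref{Lemma:SufInfNoMac}, since the present $W$ is precisely the unbounded-above part of the set considered there and, being contained in $\bbZ_{\geq 4}$, it dispenses entirely with the analysis of elements $w\in W$ with $w\le -(b-a)$. Throughout I write $B_k$ for the $k$-th block $[\frac{(k-1)(k+2)}{2}+2^{k+1},\ \frac{(k-1)(k+2)}{2}+2^{k+1}+k]$, so that $W=\bigcup_{k\geq 1}B_k$ and $B_k$ consists of $k+1$ consecutive integers.

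For the first assertion, I would first note that the existence of a minimal complement is unaffected by replacing $W$ with a translate: if $C+W=\bbZ$ and $C\setminus\{c\}$ is never a complement, then $C+(W+t)=(C+W)+t=\bbZ$ and likewise $C\setminus\{c\}$ is never a complement to $W+t$. Translating so that $\min W=1$, we obtain $W\subseteq\bbZ_{\geq 1}$ with $1\in W$. The gap between the right endpoint $\frac{(k-1)(k+2)}{2}+2^{k+1}+k$ of $B_k$ and the left endpoint of $B_{k+1}$ equals $2^{k+1}+1$, which tends to $\infty$; hence $\limsup_{i\to\infty}(w_{i+1}-w_i)=+\infty$, and Theorem~\ref{CY12Thm2}(a) produces a minimal complement to $W$ (equivalently, to its original untranslated form).

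For the second assertion, fix $a<b<c$ in an asymptotic complement $C$. I would record the elementary inclusion: for integers $0\le u\le v$ and $x\le y$ with $y-x\ge v$, one has $u+\{x,x+1,\dots,y\}\subseteq\{0,v\}+\{x,x+1,\dots,y\}$, because the right-hand side is the full interval $\{x,x+1,\dots,v+y\}$ and the left-hand side lies inside it. Applying this with $u=b-a$, $v=c-a$, and $\{x,\dots,y\}=B_k$ (valid whenever $k>c-a$, since $B_k$ has $y-x=k$), and then translating by $a$, gives $b+B_k\subseteq\{a,c\}+W\subseteq(C\setminus\{b\})+W$ for every $k>c-a$. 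Hence $b+W$ is contained in $\big((C\setminus\{b\})+W\big)\cup\big(b+\bigcup_{k\le c-a}B_k\big)$, whose second term is finite, so $C+W$ and $(C\setminus\{b\})+W$ differ by a finite set. As $\bbZ\setminus(C+W)$ is finite, so is $\bbZ\setminus\big((C\setminus\{b\})+W\big)$, i.e. $C\setminus\{b\}$ is again an asymptotic complement to $W$.

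For the third assertion, by the second it suffices to show that every asymptotic complement of $W$ has at least three elements. No singleton works, since $\bbZ\setminus W\supseteq\bbZ_{\le 3}$ is infinite. Suppose $\{s,t\}$ with $s<t$ were an asymptotic complement; translating by $-s$ (which preserves being an asymptotic complement) we may assume it is $\{0,u\}$ with $u>0$. Choose $k_0$ with $2^{k_0+1}>u$. For each $k\ge k_0$ put $m_k=\frac{(k-1)(k+2)}{2}+2^{k+1}+k+1+u$; since $u<2^{k+1}$, $m_k$ lies strictly between $B_k$ and $B_{k+1}$, so $m_k\notin W$, while $m_k-u$ is the first integer past $B_k$, so $m_k-u\notin W$, whence $m_k\notin\{0,u\}+W$. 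These infinitely many $m_k$ are missing from $\{0,u\}+W$, contradicting cofiniteness. Thus every asymptotic complement of $W$ has at least three elements and, by the second assertion, none is minimal. The only point requiring care — routine bookkeeping rather than a genuine obstacle — is matching the admissible ranges of $k$ ($k>c-a$ in the deletion step, $k\ge k_0$ in the two-element step) to the exact arithmetic of the blocks, exactly as in the proof of Lemma~\ref{Lemma:SufInfNoMac}.
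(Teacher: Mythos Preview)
Your proof is correct and follows essentially the same approach as the paper's own proof: invoking Theorem~\ref{CY12Thm2}(a) for the existence of a minimal complement, using the interval-inclusion lemma on the blocks $B_k$ with $k>c-a$ to show $C\setminus\{b\}$ remains an asymptotic complement, and ruling out one- and two-element asymptotic complements exactly as in Lemma~\ref{Lemma:SufInfNoMac}. Your explicit handling of the translation to $\min W=1$ before applying Theorem~\ref{CY12Thm2}(a) is a small but appropriate bit of care that the paper leaves implicit.
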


\begin{proof}
The set $W$ admits a minimal complement in $\bbZ$ by Theorem \ref{CY12Thm2}(a). 

Let $a, b, c$ be elements of an asymptotic complement $C$ of $W$ with $a<b<c$. Following the same argument as in the proof of Lemma \ref{Lemma:SufInfNoMac}, it follows that 
$$
b+  \left[\dfrac{(k-1)(k+2)}{2} + 2^{k+1}, \dfrac{(k-1)(k+2)}{2} + 2^{k+1} + k\right] $$
is contained in 
$$ 
\{a,c\} + 
 \left[\dfrac{(k-1)(k+2)}{2} + 2^{k+1}, \dfrac{(k-1)(k+2)}{2} + 2^{k+1} + k\right] $$
for any $k>c-a$.
This implies 
\begin{align*}
& b + \left(W \setminus \left[4, \dfrac{(c-a-1)(c-a+2)}{2} + 2^{c-a+1} + c-a\right] \right) \\
& \subseteq 
\{a,c\}+ W \\
& \subseteq 
(C\setminus \{b\} ) + W.
\end{align*} 
Using a similar argument as in the proof of Lemma \ref{Lemma:SufInfNoMac}, it follows that 
\begin{align*}
C + W 
& = ((C\setminus \{b\} ) + W) \\
& \qquad \cup \left(b +  \left(W\cap \left[4, \dfrac{(c-a-1)(c-a+2)}{2} + 2^{c-a+1} + c-a\right] \right)\right),
\end{align*}
and hence $C\setminus \{b\}$ is an asymptotic complement to $W$. 

To prove that $W$ does not admit a minimal asymptotic complement, it suffices to prove that any asymptotic complement of $W$ contains at least three elements, which can be proved using a similar argument as in the proof of Lemma \ref{Lemma:SufInfNoMac}. 
\end{proof}

\begin{remark}
Similarly, it can be proved that there are subsets of $\bbZ$ bounded from above which admit minimal complements, but admit no minimal asymptotic complements. For instance, we could consider $-W = \{-w\,|\,w\in W\}$ where $W$ is as in Lemma \ref{Lemma:BddBelowYesMCNoMac} and use the fact that multiplication by $-1$ is an automorphism of the group $\bbZ$.  
\end{remark}

We shall now prove Theorem \ref{Thm.C}, which is a general result about the existence of minimal complements and the inexistence of minimal asymptotic complements.

\begin{proof}[Proof of Thorem \ref{Thm.C}]
We refer to the sets as in part (1), (2), (3), (4) of Theorem \ref{Thm.C} as the first, second, third, fourth set respectively.
The first set, i.e., $\cup_{k=1}^\infty I_k$ admits a minimal complement in $\bbZ$ by Theorem \ref{CY12Thm2}(a). The second, third and the fourth sets also admit minimal complements in $\bbZ$ by Theorem \ref{CY12Thm1}.

Let $a, b, c$ be three elements in $\bbZ$ with $a<b<c$. From condition (ii), it follows that $\# I_k\geq k$ for any $k\geq 1$. So for any $k\geq c-a$, it follows that $(b-a) + I_k \subseteq \{0, c\} + I_k$, which gives $b + I_k \subseteq \{a, c\} + I_k$, and hence 
\begin{equation}
\label{Eqn:abc}
b + \cup_{k=c-a}^\infty I_k \subseteq \{a, c\} + \cup_{k=c-a}^\infty I_k \subseteq \{a, c\} + \cup_{k=1}^\infty I_k.
\end{equation}
Note that for integers $x, u, v$ with $u<v$, 
\begin{equation}
\label{Eqn:lambdauv}
u + \bbZ_{\leq x } \subseteq v + \bbZ_{\leq x }.
\end{equation}
Also note that by condition (iii), any minimal complement of any one of the first, second, third and the fourth set is infinite. 

For any three distinct elements $a, b, c$ in an asymptotic complement $C$ of $\cup_{k=1}^\infty I_k$ with $a<b<c$, we obtain from Equation \eqref{Eqn:abc} that 
$$
b + \cup_{k=c-a}^\infty I_k \subseteq \{a, c\} + \cup_{k=1}^\infty I_k
 \subseteq (C\setminus \{b\}) + \cup_{k=1}^\infty I_k
.$$
Consequently, $\cup_{k=1}^\infty I_k$ does not admit any minimal asymptotic complement.

For any three distinct elements $a, b, c$ in an asymptotic complement $C$ of the second set with $a<b<c$, we obtain from Equations \eqref{Eqn:abc}, \eqref{Eqn:lambdauv} that 
$$
b + \cup_{k=c-a}^\infty I_k \subseteq \{a, c\} + \cup_{k=1}^\infty I_k
 \subseteq (C\setminus \{b\}) + \cup_{k=1}^\infty I_k,
$$
$$
b + \bbZ_{< \min I_1} 
\subseteq 
c + \bbZ_{< \min I_1} 
 \subseteq (C\setminus \{b\}) +  \bbZ_{< \min I_1} 
.$$
So 
$$b + \left((\bbZ_{< \min I_1})\cup (\cup_{k=c-a}^\infty I_k) \right) 
\subseteq 
\{a, c\} + 
\left( (\bbZ_{< \min I_1})\cup (\cup_{k=1}^\infty I_k) \right)
.$$
Consequently, $(\bbZ_{< \min I_1})\cup (\cup_{k=1}^\infty I_k)$ does not admit any minimal asymptotic complement.

For any three distinct elements $a, b, c$ in an asymptotic complement $C$ of the third set with $a<b<c$, we obtain from Equations \eqref{Eqn:abc}, \eqref{Eqn:lambdauv} that 
$$
b + \cup_{k=c-a}^\infty I_k \subseteq \{a, c\} + \cup_{k=1}^\infty I_k
 \subseteq (C\setminus \{b\}) + \cup_{k=1}^\infty I_k,
$$
$$
b + \bbZ_{< \min \{\min F, \min I_1\}} 
\subseteq 
c + \bbZ_{< \min \{\min F, \min I_1\}} 
 \subseteq (C\setminus \{b\}) + ( \bbZ_{< \min I_1} \setminus F)
.$$
So 
$$b + \left((\bbZ_{< \min \{\min F, \min I_1\}})\cup (\cup_{k=c-a}^\infty I_k) \right) 
\subseteq 
\{a, c\} + 
\left( (\bbZ_{< \min I_1} \setminus F )\cup (\cup_{k=1}^\infty I_k) \right)
.$$
Consequently, $(\bbZ_{< \min I_1}\setminus F)\cup (\cup_{k=1}^\infty I_k)$ does not admit any minimal asymptotic complement.

Since any asymptotic complement $C$ to the fourth set is infinite, it contains three distinct elements $a<b<c$ which are congruent modulo $n$. From Equation \eqref{Eqn:abc} it follows that 
$$
b + \cup_{k=c-a}^\infty I_k \subseteq \{a, c\} + \cup_{k=1}^\infty I_k
 \subseteq (C\setminus \{b\}) + \cup_{k=1}^\infty I_k.
$$
Since $b<c$ and $b$ is congruent to $c$ modulo $n$, we obtain 
\begin{align*}
b +  \{x\in \bbZ_{< \min I_1} \,|\, x\equiv  a\modu n\}
& \subseteq 
c +  \{x\in \bbZ_{< \min I_1} \,|\, x\equiv  a\modu n\}\\
&
 \subseteq (C\setminus \{b\}) +  \{x\in \bbZ_{< \min I_1} \,|\, x\equiv  a\modu n\}.
\end{align*}
So 
\begin{align*}
&b + \left(( \{x\in \bbZ_{< \min I_1} \,|\, x\equiv  a\modu n\})\cup (\cup_{k=c-a}^\infty I_k) \right) \\
&\subseteq 
\{a, c\} + 
\left( ( \{x\in \bbZ_{< \min I_1} \,|\, x\equiv  a\modu n\})\cup (\cup_{k=1}^\infty I_k) \right).
\end{align*}
Consequently, $( \{x\in \bbZ_{< \min I_1} \,|\, x\equiv  a\modu n\})\cup (\cup_{k=1}^\infty I_k)$ does not admit any minimal asymptotic complement.
\end{proof}

Lemma \ref{Lemma:BddBelowYesMCNoMac} and Theorem \ref{Thm.C}(1) give examples of bounded below subsets of $\bbZ$, and for each of them, the difference of consecutive elements is not bounded above, i.e., the hypothesis of Theorem \ref{CY12Thm2}(a) holds. Now consider the situation where the hypothesis of Theorem \ref{CY12Thm2}(a) does not hold, i.e., the difference of consecutive elements is bounded. Such a bounded below subset of $\bbZ$ may or may not admit a minimal complement, as mentioned in \cite[Theorem 4, Remark 2]{KissSandorYangJCT19}. Theorem \ref{Thm.D} considers such subsets which do not admit any minimal complement. Now let us show the Theorem.

\begin{proof}
[Proof of Theorem \ref{Thm.D}]
The set $W$ admits no minimal complement in $\bbZ$ by Theorem \ref{CY12Thm2}(b).

Let $w_1<w_2<w_3<\cdots$ denote the elements of $W$, and $v_1 < v_2 < v_3 <\cdots $ denote the elements of $\bbZ_{\geq w_1}\setminus W$. Let $i'$ be a positive integer such that $v_{i+1} - v_i\geq 2$ for all $i\geq i'$. Let $C$ be an asymptotic complement of $W$ in $\bbZ$. Since $W$ is bounded below, the set $C$ is infinite. Let $a, b, c$ be three elements of $C$ such that $a<b<c$. Let $i''$ be a positive integer such that $i''\geq i'$ and $v_{i+1} - v_i\geq c-a$ for all $i\geq i''$. For any $i\geq i''$, 
we obtain 
$$
b-a 
+ 
(v_i, v_{i+1}) 
\subseteq 
(v_i, v_{i+1}) \cup ((c-a) + (v_{i}, v_{i+1}))
\subseteq 
\{0, c-a\} + (v_i, v_{i+1}) ,
$$
which gives 
$$
b + 
(v_i, v_{i+1}) \subseteq 
\{a, c\} + (v_i, v_{i+1}) 
 \subseteq 
\{a, c\} + W .
$$
Since $i''\geq i'$, it follows that $v_{i+1} - v_i\geq 2$ for all $i\geq i''$ and hence $W_{\geq v_{i''}}$ is equal to $(v_{i''}, v_{i''+1}) \cup (v_{i''+1}, v_{i''+2}) \cup (v_{i''+2}, v_{i''+3}) \cup (v_{i''+3}, v_{i''+4})\cup \cdots$. So the set $b + W_{\geq v_{i''}}$ is contained in $\{a, c\} + W$. 
Since $W$ is bounded below, it follows that $C\setminus \{b\}$ is also an asymptotic complement of $W$. Hence $W$ admits no minimal asymptotic complement. 
\end{proof}

We conclude with the following corollary.

\begin{corollary}
Let $W$ denote the set of all positive integers which do not belong to 
$$\bigcup _{k=1}^\infty [10k^2, 10k(k+1)].$$
The set $W$ does not admit a minimal complement in $\bbZ$ and $W$ admits no minimal asymptotic complement in $\bbZ$. Moreover, for any three elements $a, b, c$ in an asymptotic complement $C$ of $W$ with $a<b<c$, the set $C\setminus \{b\}$ is also an asymptotic complement to $W$. 
\end{corollary}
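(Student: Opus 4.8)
The plan is to read the corollary off from Theorem~\ref{Thm.D}; the inexistence of a minimal complement alone also follows at once from Theorem~\ref{CY12Thm2}(b). The set $W$ is plainly a bounded below infinite subset of $\bbZ$ contained in $\bbZ_{\geq 1}$, so the only hypothesis left to verify, for both Theorem~\ref{Thm.D} and Theorem~\ref{CY12Thm2}(b), is that the difference of consecutive elements of $\bbZ_{\geq 1}\setminus W$ tends to $\infty$.

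Accordingly, the first step is to describe $\bbZ_{\geq 1}\setminus W$ explicitly. It is the strictly increasing sequence of the integers $10k^2$ and $10k(k+1)$, $k\geq 1$, namely $10, 20, 40, 60, 90, 120, 160, 200, \dots$; indeed $10k^2<10k(k+1)<10(k+1)^2$ for every $k\geq 1$, so the terms are distinct and occur in the listed order. Its consecutive differences split into the two interleaved families $10k(k+1)-10k^2=10k$ and $10(k+1)^2-10k(k+1)=10(k+1)$, both tending to $+\infty$. Hence the difference of consecutive elements of $\bbZ_{\geq 1}\setminus W$ tends to $\infty$, Theorem~\ref{Thm.D} applies to $W$, and we conclude that $W$ admits neither a minimal complement nor a minimal asymptotic complement in $\bbZ$.

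For the final assertion I would simply lift the relevant step out of the proof of Theorem~\ref{Thm.D}. Let $C$ be an asymptotic complement of $W$ and $a<b<c$ three of its elements; write $v_1<v_2<\cdots$ for the elements of $\bbZ_{\geq 1}\setminus W$ and choose $i''$ so large that $v_{i+1}-v_i>\max\{c-a,2\}$ for all $i\geq i''$. For each such $i$ one has $(b-a)+(v_i,v_{i+1})\subseteq\{0,c-a\}+(v_i,v_{i+1})$, hence $b+(v_i,v_{i+1})\subseteq\{a,c\}+(v_i,v_{i+1})\subseteq\{a,c\}+W\subseteq(C\setminus\{b\})+W$. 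Since the open intervals $(v_i,v_{i+1})$ with $i\geq i''$ exhaust $W$ above $v_{i''}$ and $W$ is bounded below, the set $b+W$ is contained in $(C\setminus\{b\})+W$ up to a finite set, so $C\setminus\{b\}$ is again an asymptotic complement of $W$.

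I do not expect a genuine obstacle: the proof is a short computation followed by an appeal to Theorem~\ref{Thm.D}. The only point needing care is that Theorem~\ref{Thm.D} requires the consecutive differences of $\bbZ_{\geq 1}\setminus W$ to \emph{converge to} $\infty$ rather than merely be unbounded, and this is precisely what the two interleaved gap families $10k$ and $10(k+1)$ secure; it is also this feature that separates the present $W$ from the sets of Theorem~\ref{Thm.C} and Lemma~\ref{Lemma:BddBelowYesMCNoMac}, whose set-theoretic complements contain arbitrarily long intervals and which do admit minimal complements.
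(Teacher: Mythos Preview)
Your approach is exactly the paper's: check that $W$ meets the hypothesis of Theorem~\ref{Thm.D} and then invoke that theorem (and, for the last sentence, its proof). Your verification is simply more explicit than the paper's one-line assertion.

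One point deserves a flag, though. You describe $\bbZ_{\geq 1}\setminus W$ as the discrete sequence $\{10k^2,\,10k(k+1):k\geq 1\}=\{10,20,40,60,90,120,\dots\}$, i.e.\ you read $[10k^2,10k(k+1)]$ as the two-element set of its endpoints. Everywhere else in the paper $[a,b]$ denotes the integer interval $\{a,a+1,\dots,b\}$ (see Lemmas~\ref{Lemma:SufInfNoMac} and~\ref{Lemma:BddBelowYesMCNoMac}). Under that standard reading one gets $\bbZ_{\geq 1}\setminus W=\bigcup_{k\geq 1}\{10k^2,10k^2+1,\dots,10k(k+1)\}$, whose consecutive differences equal $1$ infinitely often; then the hypothesis of Theorem~\ref{Thm.D} fails, and in fact $W$ \emph{does} admit a minimal complement by Theorem~\ref{CY12Thm2}(a) (it even fits the template of Theorem~\ref{Thm.C}(1)), contradicting the corollary as stated. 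Your endpoint reading is the only one under which the corollary is true and Theorem~\ref{Thm.D} applies, so it is evidently what the paper intends; but you should say so explicitly rather than silently adopt a reading that clashes with the paper's own conventions.
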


\begin{proof}
Note that the set $W$ as above satisfies the hypothesis of Theorem \ref{Thm.D}. Hence the first part follows. The second part follows from the proof of Theorem \ref{Thm.D}. 
\end{proof}

\vspace*{1mm}

\section{Acknowledgements}
We wish to thank the anonymous reviewer for the valuable comments and suggestions.
The first author would like to acknowledge the fellowship of the Erwin Schr\"odinger International Institute for Mathematics and Physics (ESI) and would also like to thank the Fakult\"at f\"ur Mathematik, Universit\"at Wien where a part of the work was carried out. The second author would like to acknowledge the Initiation Grant from the Indian Institute of Science Education and Research Bhopal, and the INSPIRE Faculty Award from the Department of Science and Technology, Government of India.
\def\cprime{$'$} \def\Dbar{\leavevmode\lower.6ex\hbox to 0pt{\hskip-.23ex
  \accent"16\hss}D} \def\cfac#1{\ifmmode\setbox7\hbox{$\accent"5E#1$}\else
  \setbox7\hbox{\accent"5E#1}\penalty 10000\relax\fi\raise 1\ht7
  \hbox{\lower1.15ex\hbox to 1\wd7{\hss\accent"13\hss}}\penalty 10000
  \hskip-1\wd7\penalty 10000\box7}
  \def\cftil#1{\ifmmode\setbox7\hbox{$\accent"5E#1$}\else
  \setbox7\hbox{\accent"5E#1}\penalty 10000\relax\fi\raise 1\ht7
  \hbox{\lower1.15ex\hbox to 1\wd7{\hss\accent"7E\hss}}\penalty 10000
  \hskip-1\wd7\penalty 10000\box7}
  \def\polhk#1{\setbox0=\hbox{#1}{\ooalign{\hidewidth
  \lower1.5ex\hbox{`}\hidewidth\crcr\unhbox0}}}
\providecommand{\bysame}{\leavevmode\hbox to3em{\hrulefill}\thinspace}
\providecommand{\MR}{\relax\ifhmode\unskip\space\fi MR }
\providecommand{\MRhref}[2]{%
  \href{http://www.ams.org/mathscinet-getitem?mr=#1}{#2}
}
\providecommand{\href}[2]{#2}

\end{document}